\newtheorem{theorem}{Theorem}[section]
\newtheorem{lemma}[theorem]{Lemma}
\newtheorem{proposition}[theorem]{Proposition}
\newtheorem{corollary}[theorem]{Corollary}
\theoremstyle{definition}
\newtheorem{definition}[theorem]{Definition}
\newtheorem{question}{Question}
\theoremstyle{remark}
\newtheorem{remark}[theorem]{Remark}
\numberwithin{equation}{section}
\def\fnote#1{\footnote}
\def\co{\operatorname{co}}
\def\ignora#1{}
\def\n3#1{\left\vert  \! \left\vert \! \left\vert \, #1 \, \right\vert \!
  \right\vert \! \right\vert }
\newcommand{\iten}{\ensuremath{\widehat{\otimes}_\varepsilon}}
\newcommand{\pten}{\ensuremath{\widehat{\otimes}_\pi}}
\title{Daugavet property in tensor product spaces}
\author [A. Rueda Zoca]{Abraham Rueda Zoca}
\address{Universidad de Granada, Facultad de Ciencias.
Departamento de An\'{a}lisis Matem\'{a}tico, 18071-Granada
(Spain)} \email{ abrahamrueda@.ugr.es}
\urladdr{\url{https://arzenglish.wordpress.com}}
\author[P. Tradacete]{Pedro Tradacete}
\address{Instituto de Ciencias Matem\'aticas (CSIC-UAM-UC3M-UCM)\\
Consejo Superior de Investigaciones Cient\'ificas\\
C/ Nicol\'as Cabrera, 13--15, Campus de Cantoblanco UAM\\
28049 Madrid, Spain.}
\email{pedro.tradacete@icmat.es}
\urladdr{\url{https://www.icmat.es/miembros/ptradacete/}}
\author[I. Villanueva]{Ignacio Villanueva}
\address{Universidad Complutense de Madrid, Departamento de An\'alisis Matemático y Matem\'atica Aplicada. Instituto de Matem\'atica Interdisciplinar-IMI. Instituto de Ciencias Matem\'aticas ICMAT, Madrid, Spain.}
\email{ignaciov@ucm.es}
\thanks{The research of first author was supported by MECD (Spain) FPU2016/00015, by MINECO (Spain) Grant MTM2015-65020-P and by Junta de Andaluc\'ia Grant FQM-0185. Second author gratefully acknowledges support of MINECO (Spain) through grants MTM2016-76808-P, MTM2016-75196-P, and the ``Severo Ochoa Programme for Centres of Excellence in R\&D'' (SEV-2015-0554). Third author was supported by MINECO (Spain) Grant MTM2017-88385-P,  QUITEMAD+-CM  (S2013/ICE- 2801) and the ``Severo Ochoa Programme for Centres of Excellence in R\&D'' (SEV-2015-0554).}
\keywords{Daugavet property; tensor product spaces; octahedral norms}
\subjclass[2010]{Primary 46B04; Secondary 46B20, 46B28}
\begin{document}

\maketitle

\begin{abstract}
We study the Daugavet property in tensor products of Banach spaces. We show that $L_1(\mu)\iten L_1(\nu)$ has the Daugavet property when $\mu$ and $\nu$ are purely non-atomic measures. Also, we show that  $X\pten Y$ has the Daugavet property provided $X$ and $Y$ are $L_1$-preduals with the Daugavet property, in particular spaces of continuous functions with this property. With the same tecniques, we also obtain consequences about roughness in projective tensor products as well as the Daugavet property of projective symmetric tensor products.
\end{abstract}

\section{Introduction}

A Banach space $X$ is said to have the Daugavet property (DP) if every rank-one operator $T:X\longrightarrow X$ satisfies the equation
\begin{equation}\label{ecuadauga}
\Vert I+T\Vert=1+\Vert T\Vert,
\end{equation}
where $I$ denotes the identity operator. The previous equality is known as \textit{Daugavet equation} after I. Daugavet who proved in \cite{dau} that every compact operator on $\mathcal C([0,1])$ satisfies (\ref{ecuadauga}). Since then, several examples of Banach spaces enjoying the Daugavet property have appeared such as $\mathcal C(K)$ for a compact Hausdorff and perfect topological space $K$, $L_1(\mu)$ and $L_\infty(\mu)$ for a non-atomic measure $\mu$, and the space of Lipschitz functions $\operatorname{Lip(M)}$ over a metrically convex space $M$ (see \cite{ikw,kssw,werner} and the references therein for details). Moreover, in \cite[Lemma 2.1]{kssw} a characterisation of the Daugavet property in terms of the geometry of the slices of $B_X$ appeared (see below). This celebrated characterisation opened the door to understanding the many geometrical interpretations of the DP and has motivated a lot of research on the Daugavet equation ever since (see for instance \cite{bm2, bspw, ksw, kw}).

Once the DP has been understood for the classical Banach spaces, it is natural to study its stability under different combinations of these spaces. In this direction, it is important to understand its stability under tensor products. Given the preeminent position of the injective ($\epsilon$) and projective ($\pi$) norms as the smallest and largest respectively tensor norms, it becomes apparent the need to understand the stability of the DP under these norms. Indeed, in his 2001 survey paper \cite{werner}, D. Werner posed a list of open problems related to the Daugavet property. In particular,  \cite[Section 6, Question (3)]{werner} says:
\begin{center}
    \emph{If $X$ and/or $Y$ have the Daugavet property, what about their tensor products $X\iten Y$ and $X\pten Y$?}
\end{center}

Soon afterwards, V. Kadets, N. Kalton and D. Werner provided an example of a two dimensional complex Banach space $Y$ without the DP such that both $L_1^\mathbb C([0,1])\iten Y$ and  $L_\infty^\mathbb C([0,1])\pten Y^*$ fail the Daugavet property \cite[Theorem 4.2 and Corollary 4.3]{kkw}. Real counterexamples were given in \cite[Remark 3.13]{llr2} (the examples given there actually fail a weaker requirement than the Daugavet property). Therefore, the ``or'' part of the previous question was answered in the negative. 

In view of the preceding paragraph, the following question remains open:

\begin{question}\label{questiondproyect}
Let $X$ and $Y$ be Banach spaces with the Daugavet property. Do $X\iten Y$ and $X\pten Y$ have the Daugavet property?
\end{question}

It is well known that $L_1(\mu)\pten X=L_1(\mu,X)$ has the Daugavet property whenever $L_1(\mu)$ does. Concerning non-trivial positive results, we only know of two results. On the one hand, in \cite{br} it is proved, making a strong use of the theory of centralizer and function module representation of Banach spaces, that the projective tensor product of a Banach space without minimal $L$-summands with another non-zero Banach space has the Daugavet property. On the other hand, the first author proved in \cite{rueda} that $X\pten Y$ has the Daugavet property provided $X$ is a separable $L$-embedded Banach space with the Daugavet property and $Y$ is a non-zero Banach space with the metric approximation property. Anyway, a common denominator in both results is that, in order to get that $X\pten Y$ has the Daugavet property, only one of the spaces is required to enjoy the property. As a consequence of this fact, to the best of our knowledge, no positive result is known in the direction of Question \ref{questiondproyect}. 

The main results of this paper are two positive partial answers to  Question \ref{questiondproyect}:

\begin{theorem}\label{theo:L1itenL1}
Let $(\Omega_1,\Sigma_1,\mu_1)$ and $(\Omega_2,\Sigma_2,\mu_2)$ be measure spaces with purely non-atomic measures. Then the space $L_1(\Omega_1,\Sigma_1,\mu_1)\iten L_1(\Omega_2,\Sigma_2,\mu_2)$ has the Daugavet property.
\end{theorem}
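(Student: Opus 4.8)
The plan is to use the slice characterisation of the Daugavet property from \cite[Lemma 2.1]{kssw}: a Banach space $Z$ has the DP if and only if for every $z\in S_Z$, every slice $S=S(B_Z,f,\alpha)$ of $B_Z$, and every $\varepsilon>0$, there is $w\in S$ with $\Vert z+w\Vert>2-\varepsilon$. So I would fix a norm-one elementary tensor $g_1\otimes g_2$ with $\Vert g_i\Vert_{L_1(\mu_i)}=1$ (by density of the linear span of elementary tensors, and a routine perturbation argument, it suffices to test the condition on such elements and in fact on a dense set of functionals), a functional $\Phi\in S_{(L_1(\mu_1)\iten L_1(\mu_2))^*}$, and $\alpha,\varepsilon>0$, and I would need to produce an element of the slice $S(B_{L_1\iten L_1},\Phi,\alpha)$ that is almost at distance $2$ from $g_1\otimes g_2$. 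The key point is that $(L_1(\mu_1)\iten L_1(\mu_2))^* = \mathcal{L}(L_1(\mu_1),L_\infty(\mu_2))$, and more importantly $L_1(\mu_1)\iten L_1(\mu_2)$ sits inside $L_1(\mu_1\times\mu_2)$ isometrically as the closure of the elementary tensors; equivalently one may think of norm-one elements of the injective tensor product concretely.

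First I would pick, using that $\Phi$ has norm one, an elementary tensor $h_1\otimes h_2$ in $B_{L_1\iten L_1}$ with $\Phi(h_1\otimes h_2)>1-\varepsilon$; again a perturbation lets me assume $h_1,h_2$ are simple functions supported on sets of positive, finite measure. Now I use non-atomicity in an essential way: inside the supports of $g_1$ and $h_1$ I can find, by Lyapunov-type splitting, a measurable set $A_1\subseteq\Omega_1$ of arbitrarily small $\mu_1$-measure on which $g_1$ and $h_1$ are both essentially controlled — more precisely I would split each relevant support into many pieces of small measure and throw away the negligible contribution of $g_1$ there — and similarly a set $A_2\subseteq\Omega_2$. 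The element I want to feed into the slice is something like $w = (1-\lambda)\,\tilde h_1\otimes\tilde h_2 + \lambda\, u_1\otimes u_2$ where $\tilde h_i$ is a tiny modification of $h_i$, $u_i$ is a norm-one function supported on $A_i$, and $\lambda$ is small; the first summand keeps $\Phi(w)$ close to $1$ while the second summand is what gives the distance. To see $\Vert g_1\otimes g_2 + w\Vert_\varepsilon$ close to $2$, I would evaluate against a well-chosen functional of the form $\psi_1\otimes\psi_2$ (a pair of $L_\infty$ functions of norm one), choosing the signs/phases of $\psi_i$ so that $\psi_1\otimes\psi_2$ nearly norms $g_1\otimes g_2$ on the complement of $A_1\times A_2$ and nearly norms $u_1\otimes u_2$ on $A_1\times A_2$ — this is possible exactly because $A_i$ is disjoint from (most of) the support of $g_i$, so there is no conflict, and the injective norm allows the two tensor factors of the functional to be adjusted independently.

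The structure of the argument is therefore: (i) reduce to elementary tensors and simple functions by density/perturbation; (ii) use the slice characterisation; (iii) use non-atomicity of $\mu_1$ and $\mu_2$ to carve out small disjoint "room" $A_1\times A_2$ away from the support of $g_1\otimes g_2$; (iv) build $w$ as a convex combination putting a small mass on a bump supported in $A_1\times A_2$; (v) verify $w$ lies in the slice because the bulk of $w$ is the original near-optimiser $h_1\otimes h_2$ of $\Phi$; (vi) verify $\Vert g_1\otimes g_2+w\Vert$ is almost $2$ by testing against a rank-one functional $\psi_1\otimes\psi_2$ whose two factors are tuned separately on $A_i$ versus its complement.

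The main obstacle I anticipate is step (vi), controlling the injective norm from below: the injective norm of $g_1\otimes g_2 + w$ must be estimated by a \emph{single} functional $\psi_1\otimes\psi_2 \in B_{L_\infty}\otimes B_{L_\infty}$, and the same $\psi_1$ has to approximately norm $g_1$ on $\Omega_1\setminus A_1$ and $u_1$ on $A_1$ simultaneously — this forces the construction of $A_1$ and $u_1$ to be compatible with a genuine $L_\infty$ function (here the fact that $A_1$ is essentially disjoint from $\operatorname{supp}(g_1)$ is what saves us, since $\psi_1$ can take value $\operatorname{sign} g_1$ off $A_1$ and $\operatorname{sign} u_1$ on $A_1$ with no constraint from the other region). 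A secondary delicate point is making sure the small mass $\lambda$ and the measures $\mu_i(A_i)$ can be chosen in the right order (first $\lambda$ small depending on $\varepsilon$, then $A_i$ small depending on $\lambda$ and the simple functions involved) so that all error terms — the perturbation errors, the $\Phi$-value drop, and the gap from $2$ in the norm estimate — are simultaneously below $\varepsilon$.
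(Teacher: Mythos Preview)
Your idea in step (vi) of patching an $L_\infty\otimes L_\infty$ testing functional separately on a small region and its complement is exactly the mechanism the paper uses. But the reductions preceding it do not work. First, norm-one elementary tensors are \emph{not} dense in the sphere of $L_1\iten L_1$: finite sums of simple tensors are dense in the space, but a normalised sum such as $\tfrac12(f_1\otimes g_1+f_2\otimes g_2)$ with disjointly supported factors is far from every rank-one tensor, so the reduction to $z=g_1\otimes g_2$ is invalid. The paper keeps the fixed element a general simple-tensor sum $\sum_{i,j} a_{ij}\chi_{A_i}\otimes\chi_{B_j}$ throughout. Second, and more seriously, your convex combination $w=(1-\lambda)\tilde h_1\otimes\tilde h_2+\lambda\,u_1\otimes u_2$ cannot give $\|z+w\|$ close to $2$: when you test against $\psi_1\otimes\psi_2$ tuned to norm both $z$ and $u_1\otimes u_2$, the bulk term $(1-\lambda)\psi_1(\tilde h_1)\psi_2(\tilde h_2)$ is completely uncontrolled and can be as negative as $-(1-\lambda)$, so the resulting lower bound is only about $2\lambda$; taking $\lambda$ large instead destroys membership in the slice. (As an aside, $L_1(\mu_1)\iten L_1(\mu_2)$ does not embed isometrically into $L_1(\mu_1\times\mu_2)$; the latter is the \emph{projective} tensor product.)

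What is actually needed is an element of the slice whose \emph{entire} support has arbitrarily small measure, with no bulk term left over. This is the content of the paper's key lemma: starting from a general near-maximiser $\beta=\sum_{i,j} b_{ij}\chi_{A_i}\otimes\chi_{B_j}$ of the functional $\varphi$, one represents the partial evaluations $f\mapsto\sum_j b_{ij}\mu_1(A_i)\,\varphi(f\chi_{A_i}\otimes\chi_{B_j})$ via Radon--Nikodym by $L_\infty$ densities $g_i$ on each $A_i$, and then passes to arbitrarily small subsets $A_i'\subset A_i$ on which $g_i$ exceeds its average. The rescaled tensor $\beta'$ supported on the tiny sets $A_i'\times B_j'$ satisfies $\|\beta'\|=1$ and $\varphi(\beta')>1-\varepsilon$. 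With $\beta'$ in hand the patching of the $L_\infty$ functional goes through just as you envisaged, but this concentration lemma is the heart of the proof and your outline has no substitute for it.
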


\begin{theorem}\label{theo:maintheorem}
Let $X$ and $Y$ be two $L_1$-preduals. If $X$ and $Y$ have the Daugavet property, then so does $X\pten Y$. In particular, $C(K_1)\pten C(K_2)$ has the Daugavet property if $K_1$ and $K_2$ are compact spaces without isolated points.
\end{theorem}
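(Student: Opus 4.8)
The plan is to verify the Daugavet property of $X\pten Y$ through the slice characterisation of \cite[Lemma 2.1]{kssw}, in its equivalent dual form: $X\pten Y$ has the Daugavet property if and only if for every $z\in S_{X\pten Y}$, every $T_0\in S_{(X\pten Y)^*}$ and every $\varepsilon>0$ there is $T\in B_{(X\pten Y)^*}$ with $\langle z,T\rangle>1-\varepsilon$ and $\|T_0+T\|>2-\varepsilon$. Since finite sums of elementary tensors are norm dense in $S_{X\pten Y}$ and the condition is stable under small perturbations of $z$, we may assume $z=\sum_{i=1}^N\lambda_i\,x_i\otimes y_i$ with $\lambda_i>0$, $\sum_i\lambda_i=1$ and $\|x_i\|=\|y_i\|=1$. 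We fix the isometric identification $(X\pten Y)^*=L(X,Y^*)=L(Y,X^*)$, and observe that $X^*\iten Y^*$ embeds isometrically, as a $1$-norming and weak$^*$-dense subspace, into $(X\pten Y)^*$ (because the injective norm on $X^*\otimes Y^*$ agrees with the operator norm and $X$ has the approximation property); being $L_1(\mu)\iten L_1(\nu)$ with $\mu,\nu$ purely non-atomic, this subspace has the Daugavet property by Theorem~\ref{theo:L1itenL1}.

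The structural input from the hypotheses is the following. As $X$ and $Y$ are $L_1$-preduals with the Daugavet property, $X^*=L_1(\mu)$ and $Y^*=L_1(\nu)$ with $\mu,\nu$ purely non-atomic, so $X^*,Y^*$ and $X^{**}=L_\infty(\mu)$, $Y^{**}=L_\infty(\nu)$ all have the Daugavet property. By Grothendieck's theorem every operator from the $\mathcal L_\infty$-space $X$ into $L_1(\nu)$ (and from $Y$ into $L_1(\mu)$) is absolutely summing, hence weakly compact; therefore, by the Dunford--Pettis criterion, for the given $T_0$ and for any fixed functional $R_0\in B_{(X\pten Y)^*}$ with $\langle z,R_0\rangle$ close to $1$, the images $T_0(B_X)$ and $R_0(B_X)$ are uniformly integrable in $L_1(\nu)$. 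Finally, by Lindenstrauss' local description of $L_1$-preduals, every finite-dimensional subspace of $X$ is contained, up to $1+\delta$, in a finite-dimensional subspace which is $(1+\delta)$-isomorphic to some $\ell_\infty^m$ and $(1+\delta)$-complemented in $X$, and likewise for $Y$.

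Everything is thus reduced to producing $T\in B_{(X\pten Y)^*}$ which nearly norms $z$ and is \emph{compatible} with $T_0$, in the sense that $\|T_0+T\|>2-\varepsilon$: once such a $T$ is available, picking $u\in S_X$ and $v\in S_Y$ with $\langle(T_0+T)u,v\rangle>2-\varepsilon$ gives simultaneously $\langle T_0u,v\rangle>1-\varepsilon$ and $\langle Tu,v\rangle>1-\varepsilon$, so that $w:=u\otimes v$ lies in the slice of $B_{X\pten Y}$ determined by $T_0$ while $\|z+w\|_\pi\ge\langle z+w,T\rangle=\langle z,T\rangle+\langle Tu,v\rangle>2-2\varepsilon$. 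The construction of $T$ is the heart of the matter, and the step I expect to be the main obstacle. The idea is to start from a near-norming functional $R_0$ of $z$; compose it with a near-contractive projection $Q$ onto a finite-dimensional, almost-$\ell_\infty^m$ subspace $F$ of $X$ containing $x_1,\dots,x_N$, so that after rescaling $R_0$ factors through $F$, vanishes on $\ker Q$, and still nearly norms $z$; then use the weak compactness of $R_0$ and the non-atomicity of $\nu$ to compress $R_0$ onto a set $A\subseteq\Omega_2$ carrying essentially all of its mass, leaving a complementary set $B$ of positive measure without spoiling the norming of $z$; and finally add to this compressed functional a perturbation supported off $F$ (so that it does not disturb $x_1,\dots,x_N$, and hence not $z$) and with range inside $L_1(\nu|_B)$, calibrated — using the Daugavet property of $L_1(\nu|_B)$, the splitting $L_1(\nu)=L_1(\nu|_A)\oplus_1 L_1(\nu|_B)$, and the local $\ell_\infty^m$-structure of $X$ — so that the sum is contractive and has $\|(T_0+T)u\|_{Y^*}$ close to $2$ at a test vector $u\in\ker Q\cap S_X$ nearly norming $T_0$ (such $u$ existing after enlarging $F$ if necessary). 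Balancing these requirements — choosing $u$ off $F$ while still almost norming $T_0$, and keeping the modified functional contractive — is exactly the point at which the Daugavet property, equivalently the octahedrality, of \emph{both} $X,Y$ and their duals is used together with the non-atomicity; this is why ``one of the spaces has the Daugavet property'' does not suffice, in accordance with the counterexamples recalled in the introduction.

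The statement about $C(K_1)\pten C(K_2)$ is then the special case $X=C(K_1)$, $Y=C(K_2)$, since $C(K)$ is an $L_1$-predual which has the Daugavet property precisely when $K$ has no isolated points.
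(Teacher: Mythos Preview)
Your proposal rests on a claim that is false, and separately the central construction is never carried out.

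The claim is that for an $L_1$-predual $X$ with the Daugavet property one has $X^*=L_1(\mu)$ with $\mu$ \emph{purely non-atomic}. This fails already for $X=C([0,1])$: in $C([0,1])^*=M([0,1])$ every Dirac measure $\delta_t$ is a lattice atom, so the representing measure has atoms; indeed the functional $\nu\mapsto\nu(\{0\})$ lies in $M([0,1])^*$ and the slice $\{\nu\in B_{M([0,1])}:\nu(\{0\})>1-\varepsilon\}$ has diameter at most $4\varepsilon$, so $M([0,1])$ does not even have the slice diameter two property, let alone the Daugavet property. Hence your appeal to Theorem~\ref{theo:L1itenL1} for $X^*\iten Y^*$ is unfounded, as are the subsequent uses of ``the Daugavet property of $L_1(\nu|_B)$'' and of $X^{**},Y^{**}$. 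The Daugavet property of an $L_1$-predual is characterised in \cite{bm} via the weak$^*$ topology on $\operatorname{ext}(B_{X^*})$, not via non-atomicity of a representing measure.

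Even ignoring this, the construction of $T$ is only sketched. You need $u\in\ker Q\cap S_X$ almost norming $T_0$; you write ``such $u$ existing after enlarging $F$ if necessary'', but enlarging $F$ \emph{shrinks} $\ker Q$, and no reason is given why an arbitrary $T_0\in L(X,Y^*)$ should be almost normed on a prescribed finite-codimensional subspace. The ``balancing'' you flag as the main obstacle is not resolved.

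The paper's argument is different in spirit and avoids the dual entirely. Working with the primal slice characterisation, one fixes a norm-one bilinear $G$ nearly norming $z$ and a basic tensor $x_0\otimes y_0$ nearly norming $G$, approximates $z\approx\sum x_i\otimes y_i$, and then uses \cite[Lemma~2.8]{kssw} in $X$ and in $Y$ separately to pick $x,y$ with $x\otimes y$ in the given slice and with $x$ (resp.\ $y$) $\ell_1$-orthogonal to $E:=\operatorname{span}\{x_i\}$ (resp.\ $F:=\operatorname{span}\{y_i\}$). This orthogonality makes the finite-rank map $e+\lambda x\mapsto e+\lambda x_0$ on $E\oplus\mathbb{R}x$ a near-isometry; the $L_1$-predual hypothesis enters only through Lindenstrauss' extension theorem for compact operators \cite[Theorem~6.1]{linds}, extending it to $\varphi:X\to X$ with $\|\varphi\|\leq(1+\eta)/(1-\eta)$, and likewise producing $\phi:Y\to Y$. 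Then $T(u,v):=G(\varphi(u),\phi(v))$ satisfies $T(z)\approx G(z)\approx 1$ and $T(x,y)=G(x_0,y_0)\approx 1$, giving $\|z+x\otimes y\|>2-\varepsilon$. The point is to transport the \emph{new} element $x\otimes y$ to the \emph{known} near-maximiser $x_0\otimes y_0$ of $G$ by operators on $X$ and $Y$ that approximately fix the legs of $z$, rather than to modify a functional in $(X\pten Y)^*$ by measure-theoretic surgery.
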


The proof of Theorem \ref{theo:L1itenL1} is based on a discretization approach. It requires only the definition of the injective tensor norm and some fine measure theoretical arguments. Section 
\ref{section:inyedp} is devoted to this proof and it is totally self contained. 

\smallskip

By the duality of the injective and projective tensor norms, Theorems \ref{theo:L1itenL1} and \ref{theo:maintheorem} are somehow dual to each other. Therefore, it is not too surprising that it is possible to use similar measure theoretical arguments as in the proof of Theorem \ref{theo:L1itenL1} to prove that $C(K_1)\pten C(K_2)$ has the Daugavet property whenever $C(K_1)$ and $C(K_2)$ have it. However, a more abstract version of this proof allows us to extend the result to general $L_1$ preduals with the DP, and this is the proof we present in Section \ref{section:ODP}, which is again quite self contained. 

\smallskip

An inspection to this last proof together with the characterisation of the Daugavet property in $X\pten Y$ given in Proposition \ref{propmotiexten} point out the convenience of identifying a certain property about extension of bounded operators from $X$ to $Y^*$. Motivated by this, in Section \ref{section:ODP1} (see Definition \ref{defi:odp}), we introduce  the \textit{operator Daugavet property (ODP)}. Since ODP is a sufficient condition for Banach spaces $X$ and $Y$ in order to make $X\pten Y$ enjoy the DP (see Theorem \ref{theo:odpwerneranswer}), the rest of Section \ref{section:ODP1} is devoted to providing new examples of Banach spaces with the ODP. These include for instance, $L_1$-preduals with the Daugavet property, $L_1(\mu)$ spaces with non-atomic measures, or $\ell_\infty$-sums of spaces with the ODP.

\smallskip

In Section \ref{section:appodp}, we show how ODP can be applied to solve different isometric problems in the setting of tensor products. First, in Theorem \ref{theo:symmocta}, it is proved that if a Banach space $X$ has ODP, then all the projective symmetric tensor products $\widehat{\otimes}_{\pi,s,N} X$ have an octahedral norm (see definition below). In general, we are not able to get Daugavet property in such symmetric tensor product spaces because we lack a good description of norming sets for spaces of polynomials. However, making  use of the Dunford-Pettis property, we will prove in Proposition \ref{propo:tensosime} that $\widehat{\otimes}_{\pi,s,N} \mathcal C(K)$ has the Daugavet property whenever $K$ is a compact Hausdorff topological space without any isolated point and $N\in\mathbb N$. Let us point out that, to the best of our knowledge, the first (non-trivial) examples of projective symmetric tensor product spaces with the Daugavet property or with an octahedral norm are those given in Section \ref{section:appodp}. 

In this same section, we also use the ODP in order to get some consequences about roughness in projective tensor products. Indeed, we prove in Proposition \ref{propo2-ruda} that the norm of $X\pten Y$ is $2$-rough whenever $X$ has the ODP and $Y$ is non-zero. As an application, we derive consequences about stability of diameter two properties by injective tensor products of the form $L_1\iten X$. These are motivated by the question, posed in \cite[Question (b)]{aln}, about how diameter two properties are preserved by tensor product spaces.

\subsection{Terminology} 
We will consider only real Banach spaces. Given a Banach space $X$, we will denote the closed unit ball and the unit sphere of $X$ by $B_X$ and $S_X$ respectively. We will also denote by $X^*$ the topological dual of $X$. Given a bounded subset $C$ of $X$, $x^*\in X^*$ and $\alpha>0$, a \textit{slice of $C$} is given by
$$S(C,x^*,\alpha):=\{x\in C:x^*(x)>\sup x^*(C)-\alpha\}.$$

 A Banach space $X$ is said to have the \textit{Daugavet property} if every rank-one operator $T:X\longrightarrow X$ satisfies the equation
$$\Vert I+T\Vert=1+\Vert T\Vert,$$
where $I:X\longrightarrow X$ denotes the identity operator. It is known \cite{kssw} that a Banach space $X$ has the Daugavet property if, and only if, for every $\varepsilon>0$, every point $x\in S_X$ and every slice $S$ of $B_X$ there exists a point $y\in S$ such that $\Vert x+y\Vert>2-\varepsilon$. This characterisation will be freely used throughout the text without any explicit mention.

By an $L_1$-predual we will mean a Banach space $X$ such that $X^*=L_1(\mu)$ for certain measure $\mu$. We refer the reader to the seminal paper \cite{linds} for background on these spaces and the connection with norm preserving extension of operators. Also, we refer to \cite{bm} for background about $L_1$-preduals with the Daugavet property.

Given two Banach spaces $X$ and $Y$, we denote by $L(X,Y)$ the space of bounded linear operators $T:X\longrightarrow Y$. Also, we denote by $B(X,Y)$ the space of bounded bilinear maps $G:X\times Y\rightarrow \mathbb R$. Recall that the
\textit{projective tensor product} of $X$ and $Y$, denoted by
$X\pten Y$, is the completion of the algebraic tensor product $X\otimes Y$ under the norm given by
\begin{equation*}
   \Vert u \Vert :=
   \inf\left\{
      \sum_{i=1}^n  \Vert x_i\Vert\Vert y_i\Vert
      : u=\sum_{i=1}^n x_i\otimes y_i
      \right\}.
\end{equation*}
It follows easily from the definition  that $B_{X\pten Y}=\overline{\co}(B_X\otimes B_Y)
=\overline{\co}(S_X\otimes S_Y)$.
Moreover, given Banach spaces $X$ and $Y$, it is well known that
$(X\pten Y)^*=L(X,Y^*)=B(X,Y)$ \cite{DeFl}.

The \textit{injective tensor product} of $X$ and $Y$, denoted by $X \iten Y$, is the completion of $X\otimes Y$ under the norm given by
\begin{equation*}
   \Vert u\Vert:=\sup
   \left\{
      \sum_{i=1}^n \vert x^*(x_i)y^*(y_i)\vert
      : x^*\in S_{X^*}, y^*\in S_{Y^*}
   \right\},
\end{equation*}
where $u:=\sum_{i=1}^n x_i\otimes y_i$. Note that, in the above formula, $S_{X^*}$ and $S_{Y^*}$ can be replaced with norming sets for $X$ and $Y$ respectively. We refer the reader to \cite{DeFl,rya} for a detailed treatment of tensor product spaces.

\section{$L_1\iten L_1$ has the Daugavet property}\label{section:inyedp}

In this section, we prove Theorem \ref{theo:L1itenL1}. As mentioned in the introduction, the proof only requires the definition of the injective tensor product and measure theoretical reasonings. 


\begin{proof}[Proof of Theorem \ref{theo:L1itenL1}]
For brevity, let $X=L_1(\Omega_1,\Sigma_1,\mu_1)\iten L_1(\Omega_2,\Sigma_2,\mu_2)$.

Let $\alpha \in X$ and $\varphi \in X^*$ with $\|\alpha\|_X=1=\|\varphi\|_{X^*}$, and let $\varphi\otimes\alpha$ denote the rank-one operator given by $\varphi\otimes\alpha(x)=\varphi(x)\alpha$ for $x\in X$. We will show that 
$$
\|I+\varphi\otimes\alpha\|=1+\|\varphi\otimes\alpha\|=2.
$$ 
To this end fix $\varepsilon>0$. Notice that, up a perturbation argument, there is no loss of generality in assuming that $\varphi$ is a norm-attaining functional. Let $\beta \in X$ with $\|\beta\|=1$ such that 
$$
\varphi(\beta)=1.
$$
Since simple functions are dense in any $L_1$ space, up to perturbation, we can assume without loss of generality that there are two collections of pairwise disjoint sets of finite measure $(A_i)_{i=1}^n\subset \Sigma_1$ and $(B_j)_{j=1}^n\subset \Sigma_2$, and scalars $(a_{ij})_{i,j=1}^n$, $(b_{ij})_{i,j=1}^n$ such that 
$$
\alpha =\sum_{i,j=1}^n a_{ij}\chi_{A_i}\otimes \chi_{B_j},
$$
and
$$
\beta =\sum_{i,j=1}^n b_{ij}\chi_{A_i}\otimes \chi_{B_j}.
$$
Also note that it follows immediately from the definition of the injective norm that the set 
$$
N=\{h_1\otimes h_2: \, h_i\in \operatorname{ext}(B_{L_\infty(\Omega_i,\Sigma_i,\mu_i)}), \,\text{ for }i=1,2\}
$$ is a norming subset of $X^*$; here $\operatorname{ext}(B_{L_\infty(\Omega_i,\Sigma_i,\mu_i)})$ denotes the set of extreme points of $B_{L_\infty(\Omega_i,\Sigma_i,\mu_i)}$, or in other words, $|h_i(x)|=1$ for $\mu_i$-almost every $x\in \Omega_i$. 

We will need the following: 

\begin{lemma}\label{lemaLinfty}
For every $\delta>0$ there exist $(A'_i)_{i=1}^n\subset \Sigma_1$, $(B'_j)_{j=1}^n\subset\Sigma_2$ such that
\begin{enumerate}
\item $A'_i\subset A_i$, $\mu_1(A'_i)<\delta$,  for $1\leq i\leq n$.
\item $B'_j\subset B_j$, $\mu_2(B'_j)<\delta$, for $1\leq j\leq n$.
\item If we denote $$\beta'=\sum_{i,j=1}^n b_{ij} \frac{\mu_1(A_i)\mu_2(B_j)}{\mu_1(A'_i)\mu_2(B'_j)}\chi_{A'_i}\otimes\chi_{B'_j},$$ then we have $$\|\beta'\|_X=1\quad\textrm{ and }\quad\varphi(\beta')>1-\varepsilon.$$
\end{enumerate}
\end{lemma}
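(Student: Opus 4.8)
The plan is to realize $\beta'$ as a mass-preserving ``squeeze'' of $\beta$: each product block $A_i\times B_j$ is replaced by a much smaller block $A_i'\times B_j'$ carrying the same total $\beta$-mass, and the subsets $A_i',B_j'$ are chosen (via Lyapunov's convexity theorem) so that the value of $\varphi$ is literally unchanged. I would first record a norm computation making the equality $\|\beta'\|_X=1$ in clause (3) automatic. Discarding the null blocks, we may assume $\mu_1(A_i),\mu_2(B_j)>0$ for all $i,j$. Since $N$ is norming, for any ``product-block'' simple function $\gamma=\sum_{i,j}c_{ij}\chi_{C_i}\otimes\chi_{D_j}$, with $(C_i)\subset\Sigma_1$ and $(D_j)\subset\Sigma_2$ pairwise disjoint of finite positive measure, one has $\|\gamma\|_X=\sup\big|\sum_{i,j}c_{ij}(\int_{C_i}h_1\,d\mu_1)(\int_{D_j}h_2\,d\mu_2)\big|$, the supremum over extreme points $h_k$ of $B_{L_\infty(\Omega_k,\Sigma_k,\mu_k)}$, i.e.\ over the $\{-1,1\}$-valued functions. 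By non-atomicity of $\mu_1$ and disjointness of the $C_i$, the vector $(\int_{C_i}h_1\,d\mu_1)_i$ runs exactly over $\prod_i[-\mu_1(C_i),\mu_1(C_i)]$ as $h_1$ varies, and likewise for $h_2$; writing $\int_{C_i}h_1\,d\mu_1=\mu_1(C_i)u_i$, $\int_{D_j}h_2\,d\mu_2=\mu_2(D_j)v_j$ gives
$$\|\gamma\|_X=\sup\Big\{\Big|\sum_{i,j}c_{ij}\,\mu_1(C_i)\mu_2(D_j)\,u_iv_j\Big|:|u_i|\le 1,\ |v_j|\le 1\Big\},$$
so $\|\gamma\|_X$ depends only on the block masses $c_{ij}\mu_1(C_i)\mu_2(D_j)$. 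Since $\beta$ has block masses $b_{ij}\mu_1(A_i)\mu_2(B_j)$ and $\beta'$ has the same block masses $b_{ij}\frac{\mu_1(A_i)\mu_2(B_j)}{\mu_1(A_i')\mu_2(B_j')}\cdot\mu_1(A_i')\mu_2(B_j')$ for \emph{every} admissible choice of subsets of positive measure, this yields $\|\beta'\|_X=\|\beta\|_X=1$ for free.

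It remains to produce small $A_i',B_j'$ with $\varphi(\beta')>1-\varepsilon$; in fact I would obtain $\varphi(\beta')=\varphi(\beta)=1$. Fix $i$. The set function $E\mapsto\varphi(\chi_E\otimes\chi_{B_j})$ on $\{E\in\Sigma_1:E\subset A_i\}$ is countably additive (if $E=\bigsqcup_kE_k$ then $\chi_E\otimes\chi_{B_j}=\sum_k\chi_{E_k}\otimes\chi_{B_j}$ converges in $X$, since $\|\chi_{E_k}\otimes\chi_{B_j}\|_X=\mu_1(E_k)\mu_2(B_j)$, and $\varphi$ is continuous) and $\mu_1$-absolutely continuous (as $|\varphi(\chi_E\otimes\chi_{B_j})|\le\mu_1(E)\mu_2(B_j)$), hence non-atomic. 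So the $\mathbb R^{n+1}$-valued measure $E\mapsto\big(\mu_1(E),\varphi(\chi_E\otimes\chi_{B_1}),\dots,\varphi(\chi_E\otimes\chi_{B_n})\big)$ on $\{E\subset A_i\}$ is finite and non-atomic; writing $w_i$ for its value at $A_i$, Lyapunov's convexity theorem gives that its range is convex, so, containing $0$ and $w_i$, it contains $\lambda w_i$ for all $\lambda\in[0,1]$. Picking $\lambda_i\in(0,1]$ with $\lambda_i\mu_1(A_i)<\delta$ produces $A_i'\subset A_i$ with $\mu_1(A_i')=\lambda_i\mu_1(A_i)\in(0,\delta)$ and $\varphi(\chi_{A_i'}\otimes\chi_{B_j})=\lambda_i\,\varphi(\chi_{A_i}\otimes\chi_{B_j})$ for all $j$. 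Now fix these $A_i'$ and repeat in the second variable: for each $j$ the finite non-atomic $\mathbb R^{n+1}$-valued measure $F\mapsto\big(\mu_2(F),\varphi(\chi_{A_1'}\otimes\chi_F),\dots,\varphi(\chi_{A_n'}\otimes\chi_F)\big)$ on $\{F\subset B_j\}$ yields, via Lyapunov, a set $B_j'\subset B_j$ with $\mu_2(B_j')=\rho_j\mu_2(B_j)\in(0,\delta)$ for a suitable $\rho_j\in(0,1]$ and $\varphi(\chi_{A_i'}\otimes\chi_{B_j'})=\rho_j\,\varphi(\chi_{A_i'}\otimes\chi_{B_j})$ for all $i$.

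With these choices, (1) and (2) hold by construction and the norm part of (3) holds by the first paragraph. Combining the two displayed relations, $\varphi(\chi_{A_i'}\otimes\chi_{B_j'})=\lambda_i\rho_j\,\varphi(\chi_{A_i}\otimes\chi_{B_j})$ while $\mu_1(A_i')\mu_2(B_j')=\lambda_i\rho_j\,\mu_1(A_i)\mu_2(B_j)$, so every scalar factor in
$$\varphi(\beta')=\sum_{i,j}b_{ij}\,\frac{\mu_1(A_i)\mu_2(B_j)}{\mu_1(A_i')\mu_2(B_j')}\,\varphi(\chi_{A_i'}\otimes\chi_{B_j'})$$
is $1$, whence $\varphi(\beta')=\sum_{i,j}b_{ij}\varphi(\chi_{A_i}\otimes\chi_{B_j})=\varphi(\beta)=1>1-\varepsilon$. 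I expect the only delicate point to be the norm identity of the first paragraph, i.e.\ recognizing that the injective norm of a product-block simple function sees only the block masses; this is exactly where the non-atomicity of $\mu_1,\mu_2$ and the $\pm1$-valued structure of $\operatorname{ext}(B_{L_\infty})$ enter. The extraction of the $A_i',B_j'$ is then a routine application of Lyapunov's theorem. (One could instead preserve $\varphi$ only approximately by a Radon--Nikodym/averaging argument — which is presumably why the statement carries an $\varepsilon$ — but the Lyapunov route gives the exact identity at no extra cost.)
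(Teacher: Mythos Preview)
Your proof is correct and follows a genuinely different route from the paper's. The norm identity $\|\beta'\|_X=\|\beta\|_X$ is handled essentially the same way in both arguments: reduce via the norming set $N$ to a supremum over signs $(\varepsilon_i,\sigma_j)\in\{-1,1\}^n\times\{-1,1\}^n$ of $\sum_{i,j}b_{ij}\varepsilon_i\sigma_j\,\mu_1(A_i)\mu_2(B_j)$, which depends only on the block masses. The real difference is in how the small subsets are selected. The paper proceeds by Radon--Nikodym and averaging: for each $i$ it represents the functional $f\mapsto\sum_j b_{ij}\mu_1(A_i)\,\varphi(f\chi_{A_i}\otimes\chi_{B_j})$ on $L_1(A_i)$ by a density $g_i\in L_\infty(A_i)$, observes that the average of $g_i$ over $A_i$ equals $r_i:=\sum_j b_{ij}\varphi(\chi_{A_i}\otimes\chi_{B_j})$, and then picks $A_i'\subset A_i$ of small measure on which $g_i>r_i-\varepsilon/2n$; this yields only $\varphi(\beta')>1-\varepsilon/2$ after the first pass, and $>1-\varepsilon$ after repeating symmetrically in the second variable. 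Your Lyapunov argument instead enforces the \emph{exact} proportionality $\varphi(\chi_{A_i'}\otimes\chi_{B_j})=\lambda_i\,\varphi(\chi_{A_i}\otimes\chi_{B_j})$ for all $j$ at once (and then likewise in $j$), so that $\varphi(\beta')=\varphi(\beta)=1$ with no $\varepsilon$-loss --- which, as you note, explains why the $\varepsilon$ in the statement is not really needed. Your route is a bit slicker and gives a sharper conclusion; the paper's route is slightly more elementary in that it avoids Lyapunov's theorem, at the cost of carrying the approximation. For the downstream application to the Daugavet property both are equally effective.
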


\begin{proof}
Fix $1\leq i\leq n$. Let $r_i=\sum_{j=1}^n b_{ij}\varphi(\chi_{A_i}\otimes \chi_{B_j})$. Note that $\sum_{i=1}^n r_i=\varphi(\beta)=1.$ For $f\in L_1(\Omega_1,\Sigma_1,\mu_1)$, let 
$$
\varphi_i(f)= \sum_{j=1}^n b_{ij}\mu_1(A_i)\varphi(f\chi_{A_i}\otimes \chi_{B_j}).
$$
Clearly, $\varphi_i$ is linear and 
$$
|\varphi_i(f)|\leq \|\varphi\|_{X^*} \sum_{j=1}^n |b_{ij}|\mu_1(A_i)\mu_2(B_j)\|f\|_{L_1}\leq C\|f\|_{L_1},
$$
for some finite $C$. Moreover, $\varphi_i(f)=0$ whenever $f\chi_{A_i}=0$. Hence, by Radon-Nikodym Theorem, there is $g_i\in L_\infty(A_i,\Sigma_1\cap A_i,\mu_1|_{A_i})$ such that
$$
\varphi_i(f)=\int_{A_i} g_i f d\mu_1.
$$
Since 
$$
\frac{1}{\mu_1(A_i)}\int_{A_i} g_i d\mu_1=\varphi_i\Big(\frac{\chi_{A_i}}{\mu_1(A_i)}\Big)=r_i,
$$
it follows that $g_i>r_i-\varepsilon/2n$ on a subset of $A_i$ with positive measure. Let $A'_i$ be such a set satisfying the additional requirement that $\mu_1(A'_i)<\delta$.
We have that
$$
\varphi_i\Big(\frac{\chi_{A'_i}}{\mu_1(A'_i)}\Big)=\frac{1}{\mu_1(A'_i)}\int_{A'_i} g_id\mu_1>r_i-\frac{\varepsilon}{2n}.
$$

Let now 
$$
\beta'=\sum_{i,j=1}^n b_{ij} \frac{\mu_1(A_i)}{\mu_1(A'_i)}\chi_{A'_i}\otimes\chi_{B_j}.
$$
It follows that
$$
\varphi(\beta')=\sum_{i,j=1}^n b_{ij} \frac{\mu_1(A_i)}{\mu_1(A'_i)}\varphi(\chi_{A'_i}\otimes\chi_{B_j})=\sum_{i=1}^n \varphi_i\Big(\frac{\chi_{A'_i}}{\mu_1(A'_i)}\Big)>\sum_{i=1}^n r_i-\frac{\varepsilon}{2n}= 1-\frac{\varepsilon}{2}.
$$
Moreover, we have 
\begin{align*}
\|\beta'\|_X&=\sup_{h_1\otimes h_2\in N} \langle h_1\otimes h_2,\beta'\rangle\\
&=\sup_{h_1\otimes h_2\in N}  \sum_{i,j=1}^n b_{ij} \frac{\mu_1(A_i)}{\mu_1(A'_i)}\int_{A'_i}h_1d\mu_1\int_{B_j}h_2d\mu_2\\
&=\sup_{\varepsilon_i,\sigma_j\in\{-1,+1\}}  \sum_{i,j=1}^n b_{ij}\varepsilon_i\sigma_j \mu_1(A_i)\mu_2(B_j)\\
&=\sup_{h_1\otimes h_2\in N}  \sum_{i,j=1}^n b_{ij} \int_{A_i}h_1d\mu_1\int_{B_j}h_2d\mu_2\\
&=\|\beta\|_X.
\end{align*}

Finally, if we make the same argument starting with $\beta'$ and interchanging the role of $i$ and $j$, then the result follows.
\end{proof}

Now, let $f\otimes g\in N$ be such that
\begin{equation}\label{ec:th41normavie}
\langle f\otimes g,\alpha\rangle>1-\varepsilon.
\end{equation}
Take 
\begin{equation}\label{ec:th41delta}
0<\delta<\frac{\varepsilon}{2\left(\underset{1\leq i\leq n}{\max}\mu_1(A_i)+\underset{1\leq j\leq n}{\max}\mu_2(B_j)\right)\sum_{i,j=1}^n|a_{ij}|},
\end{equation}
and let $(A'_i)_{i=1}^n\subset \Sigma_1$, $(B'_j)_{j=1}^n\subset\Sigma_2$ and $\beta'$ as given in Lemma \ref{lemaLinfty}. Let also $f'\otimes g'\in N$ be such that 
\begin{equation}\label{ec:teo41normanuevo}
\langle f'\otimes g',\beta'\rangle>1-\varepsilon.
\end{equation}

Now, let us define
$$
\tilde f(x)=
\left\{
\begin{array}{cc}
f'(x)  & \text{for }x\in \bigcup_{i=1}^n A'_i   \\
&\\
f(x)  & \text{elsewhere,}     
\end{array}
\right.
$$
$$
\tilde g(y)=
\left\{
\begin{array}{cc}
g'(y)  & \text{for }y\in \bigcup_{j=1}^n B'_j   \\
&\\
g(y)  & \text{elsewhere.}     
\end{array}
\right.
$$

First, note that by our choice of $\delta$ we have
\begin{align*}
\Big|\sum_{i,j=1}^n a_{ij}&\Big(\langle (f'-f)\otimes g,\chi_{A'_i}\otimes\chi_{B_j}\rangle+\langle f\otimes (g'-g),\chi_{A_i}\otimes \chi_{B'_j}\rangle\Big)\Big|\leq\\
&\leq \sum_{i,j=1}^n |a_{ij}|\Big(\int_{A'_i}|f'|+|f|d\mu_1\int_{B_j}|g|d\mu_2+\int_{A_i}|f|d\mu_1\int_{B'_j}|g'|+|g|d\mu_2\Big)\leq\\
&\leq \sum_{i,j=1}^n |a_{ij}|2(\mu_1(A'_i)\mu_2(B_j)+\mu_1(A_i)\mu_2(B'_j))\mathop{\leq}\limits^{\mbox{\scriptsize(\ref{ec:th41delta})}} \varepsilon.
\end{align*}

From the above estimate and taking into account that $\varphi(\beta')>1-\varepsilon$, it follows that
\begin{align*}
\|I+\varphi\otimes \alpha\| &\geq \|\beta'+\varphi(\beta')\alpha\|_X\\
&\geq \langle\tilde f\otimes \tilde g, \beta'+\varphi(\beta')\alpha\rangle\\
&= \sum_{i,j=1}^n \Big(b_{ij} \frac{\mu_1(A_i)\mu_2(B_j)}{\mu_1(A'_i)\mu_2(B'_j)}\langle\tilde f\otimes \tilde g,\chi_{A'_i}\otimes\chi_{B'_j}\rangle+\varphi(\beta') a_{ij}\langle\tilde f\otimes \tilde g,\chi_{A_i}\otimes \chi_{B_j}\rangle\Big)\\
&= \langle f'\otimes g',\beta'\rangle+ \varphi(\beta')(\sum_{i,j=1}^n a_{ij} \big( \langle f\otimes  g,\chi_{A_i}\otimes\chi_{B_j}\rangle\\
&+\langle (f'-f)\otimes g,\chi_{A'_i}\otimes\chi_{B_j}\rangle+\langle f\otimes (g'-g),\chi_{A_i}\otimes \chi_{B'_j}\rangle\big)\\
&\mathop{>}\limits^{\mbox{\scriptsize (\ref{ec:teo41normanuevo})}} 1-\varepsilon +\varphi(\beta')\big(\langle f\otimes g,\alpha\rangle-\varepsilon\big)\\
&\mathop{>}\limits^{\mbox{\scriptsize(\ref{ec:th41normavie})}}1-\varepsilon+(1-\varepsilon)(1-2\varepsilon).
\end{align*}

Since $\varepsilon>0$ was arbitrary, we get that $\|I+\varphi\otimes \alpha\| \geq2$ as claimed.
\end{proof}


\begin{remark}
Notice that the same idea works for the injective tensor product of any finite number of $L_1$ spaces with the Daugavet property.
\end{remark}

\section[$C(K)\pten C(K)$ has the DP]{The projective tensor product of $L_1$ preduals has the Daugavet Property}\label{section:ODP}

In this section we prove Theorem \ref{theo:maintheorem}. We start with the following Proposition, which is inspired by the characterisation of octahedrality of the norm appeared in \cite[Lemma 3.21]{lan}.

\begin{proposition}\label{propmotiexten}
Let $X$ and $Y$ be Banach spaces. The following assertions are equivalent:
\begin{enumerate}
    \item \label{propomotiexten1} $X\pten Y$ has the Daugavet property.
    \item \label{propomotiexten2} Given a finite-dimensional subspace $E$ of $X\pten Y$, a slice $S$ of $B_{X\pten Y}$, an operator $T\in L(X,Y^*)$ and $\varepsilon>0$ we can find $x\otimes y\in S$ and $G\in L(X,Y^*)$ such that $T=G$ on $E$, $G(x)(y)=G(x\otimes y)=\Vert T\Vert$ and $\Vert G\Vert\leq (1-\varepsilon)^{-1}\Vert T\Vert$.
\end{enumerate}
\end{proposition}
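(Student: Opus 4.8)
The plan is to prove the two implications separately, using throughout the slice characterisation of the Daugavet property, the identification $(X\pten Y)^{*}=L(X,Y^{*})$, and the equality $B_{X\pten Y}=\overline{\co}(S_X\otimes S_Y)$. For $(\ref{propomotiexten2})\Rightarrow(\ref{propomotiexten1})$, the easy direction, I would check the slice characterisation directly: given $\varepsilon>0$, $z\in S_{X\pten Y}$ and a slice $S$ of $B_{X\pten Y}$, choose by Hahn--Banach a norm-one $T\in L(X,Y^{*})=(X\pten Y)^{*}$ with $T(z)=1$, and apply hypothesis (\ref{propomotiexten2}) to $E:=\mathbb{R}z$, the slice $S$, the operator $T$, and $\varepsilon/2$. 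This yields $x\otimes y\in S$ and $G\in L(X,Y^{*})$ with $G=T$ on $E$, $G(x\otimes y)=\Vert T\Vert=1$ and $\Vert G\Vert\le(1-\varepsilon/2)^{-1}$. Since $G(z)=T(z)=1$, the norm-one functional $G/\Vert G\Vert$ takes a value $\ge 1/\Vert G\Vert\ge 1-\varepsilon/2$ at both $z$ and $x\otimes y$, so $\Vert z+x\otimes y\Vert\ge 2/\Vert G\Vert>2-\varepsilon$ with $x\otimes y\in S$.

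For $(\ref{propomotiexten1})\Rightarrow(\ref{propomotiexten2})$, fix $E$, $S$, $T$, $\varepsilon$; the case $T=0$ being trivial, rescale so that $\Vert T\Vert=1$ and regard $T$ as a norm-one functional on $Z:=X\pten Y$. Everything reduces to producing $x\in S_X$, $y\in S_Y$ with $x\otimes y\in S$ and
\[
\Vert e+\lambda\,(x\otimes y)\Vert\ \ge\ (1-\varepsilon)(\Vert e\Vert+\abs{\lambda})\qquad\text{for all }e\in E,\ \lambda\in\mathbb{R}.
\]
Indeed, with $v:=x\otimes y$, the inequality for $\lambda=1$ and $-e$ in place of $e$ gives $\operatorname{dist}(v,E)\ge 1-\varepsilon>0$; hence $v\notin E$, the formula $g(e+\lambda v):=T(e)+\lambda$ defines a linear functional on $E\oplus\mathbb{R}v$, and $\abs{g(e+\lambda v)}=\abs{T(e)+\lambda}\le\Vert e\Vert+\abs{\lambda}\le(1-\varepsilon)^{-1}\Vert e+\lambda v\Vert$. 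A Hahn--Banach extension $G\in Z^{*}=L(X,Y^{*})$ of $g$ then satisfies $\Vert G\Vert\le(1-\varepsilon)^{-1}$, $G=T$ on $E$ and $G(x)(y)=G(v)=1=\Vert T\Vert$, i.e.\ (\ref{propomotiexten2}) after undoing the rescaling.

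Thus the heart of the matter is to find such an elementary tensor inside the slice, and here the Daugavet property is used. I would first invoke the following standard consequence of the Daugavet property of $Z$ (obtained by iterating the slice characterisation over a finite net of the unit sphere of a finite-dimensional subspace, in the spirit of the octahedrality characterisation of \cite[Lemma 3.21]{lan}): for every slice $S'$ of $B_Z$, every finite-dimensional subspace $F\subseteq Z$ and every $\delta>0$, there is $v_0\in S'$ with $\Vert f+v_0\Vert\ge(1-\delta)(\Vert f\Vert+1)$ for all $f\in F$. Applying it with $F:=E$, with $S'$ a suitably small subslice of $S$, and with $\delta$ very small, I obtain a vector $v_0$ that is octahedrally far from $E$ but, in general, is not an elementary tensor.

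It remains to pass from $v_0$ to an elementary tensor with the same two features, and this last step is the one I expect to be delicate. Write $S=S(B_Z,\phi,\alpha)$ with $\Vert\phi\Vert=1$, and, using $B_Z=\overline{\co}(S_X\otimes S_Y)$, approximate $v_0$ in norm by a convex combination $\sum_{k}\lambda_{k}\,x_{k}\otimes y_{k}$ with $x_{k}\in S_X$, $y_{k}\in S_Y$ (so each $x_{k}\otimes y_{k}$ is a unit vector, since $\Vert x\otimes y\Vert=\Vert x\Vert\,\Vert y\Vert$ in $Z$). Because $v_0$ lies in the small subslice $S'$, $\phi(v_0)$ is close to $1$ and hence $\sum_{k}\lambda_{k}\,(1-\phi(x_{k}\otimes y_{k}))$ is small, so almost all of the mass $\sum_{k}\lambda_{k}$ sits on indices $k$ with $x_{k}\otimes y_{k}\in S$. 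On the other hand, for each point $e_i$ of a fine finite net of $\{e\in E:\Vert e\Vert\le 2/\varepsilon\}$, the convexity estimate $\sum_{k}\lambda_{k}\Vert e_i+x_{k}\otimes y_{k}\Vert\ge\Vert e_i+v_0\Vert-o(1)\ge(1-\delta)(\Vert e_i\Vert+1)-o(1)$, together with $\Vert e_i+x_{k}\otimes y_{k}\Vert\le\Vert e_i\Vert+1$, forces almost all of the mass onto indices $k$ with $\Vert e_i+x_{k}\otimes y_{k}\Vert$ close to $\Vert e_i\Vert+1$. Choosing the size of $S'$, the accuracy of the convex approximation, the mesh of the net, and $\delta$ small enough in terms of $\varepsilon$ and the number of net points, these two almost-full sets of indices must overlap; any index $k_0$ in the overlap yields an elementary tensor $v:=x_{k_0}\otimes y_{k_0}\in S$ with $\Vert e_i+v\Vert\ge(1-\varepsilon/2)(\Vert e_i\Vert+1)$ for every $i$. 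The net then gives $\Vert e+v\Vert\ge(1-\varepsilon)(\Vert e\Vert+1)$ for all $e\in E$ with $\Vert e\Vert\le 2/\varepsilon$; for $\Vert e\Vert>2/\varepsilon$ this is automatic because $\Vert e+v\Vert\ge\Vert e\Vert-1$; and homogeneity upgrades it to the displayed inequality for all $e\in E$ and $\lambda\in\mathbb{R}$. The real obstacle is precisely this coordination of smallness parameters: the membership $x_{k_0}\otimes y_{k_0}\in S$ and the octahedrality-type estimates constrain them in opposing directions, so the quantifiers must be arranged and the tolerances fixed with care.
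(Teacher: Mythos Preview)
Your proposal is correct, and the direction $(\ref{propomotiexten2})\Rightarrow(\ref{propomotiexten1})$ matches the paper's argument essentially verbatim. The framework for $(\ref{propomotiexten1})\Rightarrow(\ref{propomotiexten2})$ is also the same: locate an elementary tensor $x\otimes y$ in the given slice satisfying $\Vert e+\lambda\,x\otimes y\Vert\ge(1-\varepsilon)(\Vert e\Vert+\vert\lambda\vert)$ for all $e\in E$, $\lambda\in\mathbb{R}$, then Hahn--Banach extend the obvious functional on $E\oplus\mathbb{R}(x\otimes y)$.

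Where you diverge is in how you produce that elementary tensor, and here you work much harder than necessary. The ``standard consequence'' you invoke is in fact \cite[Lemma~2.8]{kssw}, but that lemma gives more than a single point $v_0$: it yields an entire subslice $R\subseteq S$ such that \emph{every} $z\in R$ satisfies $\Vert e+\lambda z\Vert>(1-\varepsilon)(\Vert e\Vert+\vert\lambda\vert)$ for all $e\in E$, $\lambda\in\mathbb{R}$. Once you have this, the proof is over: since $B_{X\pten Y}=\overline{\co}(S_X\otimes S_Y)$, every slice of $B_{X\pten Y}$ contains an elementary tensor, so just pick any $x\otimes y\in R$. This is exactly what the paper does, in two lines. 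Your convex-combination and averaging argument---approximating $v_0$, running Markov-type estimates for the slice condition and for each net point of $E$, then intersecting the ``almost-full'' index sets---is a valid workaround if one only had a single point $v_0$, and your bookkeeping of the parameters can indeed be made to close (the total bad mass is at most $(\alpha'+\eta)/\alpha + N\cdot 2(\delta+\eta)/\varepsilon$, which is controllable since $N$ depends only on $\varepsilon$ and $\dim E$). But it is entirely avoidable once you use the full strength of \cite[Lemma~2.8]{kssw}.
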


\begin{proof}
(1)$\Rightarrow$(2). Let $E$ be a finite-dimensional subspace of $X\pten Y$, $S$ a slice of $B_{X\pten Y}$, $T\in L(X,Y^*)$ and $\varepsilon>0$. Since $X\pten Y$ has the Daugavet property, then \cite[Lemma 2.8]{kssw} yields the existence of another slice $R\subseteq S$ of $B_{X\pten Y}$ such that
\begin{equation}\label{prop:l1orto}
\Vert e+\lambda z\Vert>(1-\varepsilon)(\Vert e\Vert+\vert \lambda\vert)\end{equation}
holds for every $e\in E$, every $z\in R$ and every $\lambda\in\mathbb R$. Since $R$ is a slice of $B_{X\pten Y}$ and $B_{X\pten Y}=\overline{\operatorname{co}}(S_X\otimes S_Y)$ then we can find $x\otimes y\in R\subseteq S$. Now, define the following functional
$$\begin{array}{ccc}
\varphi: E\oplus\mathbb R (x\otimes y)\subseteq X\pten Y & \longrightarrow & \mathbb R\\
e+\lambda x\otimes y & \longmapsto & T(e)+\Vert T\Vert \lambda.
\end{array}$$
We claim that $\Vert \varphi\Vert\leq \frac{\Vert T\Vert}{1-\varepsilon}$. In fact, given $e\in E$ and $\lambda\in\mathbb R$, we have
\[
\begin{split}
\varphi(e+\lambda x\otimes y)=T(e)+\lambda \Vert T\Vert\leq \Vert T\Vert (\Vert e\Vert+\vert\lambda\vert)\mathop{
\leq}\limits^{\mbox{(\ref{prop:l1orto})}} \frac{\Vert T\Vert}{1-\varepsilon}\Vert e+\lambda x\otimes y\Vert.
\end{split}
\]
So $\varphi\in (E\oplus \mathbb R(x\otimes y))^*$. By Hahn-Banach theorem we can extend $\varphi$ to an element $G\in (X\pten Y)^*=L(X,Y^*)$, which satisfies the desired requirements.

(2)$\Rightarrow$(1). Pick $z\in S_{X\pten Y}$, a slice $S$ of $B_{X\pten Y}$ and $\varepsilon>0$. Let us find $x\otimes y\in S$ such that
$$\Vert z+x\otimes y\Vert>2(1-\varepsilon).$$
To this aim, pick $E:=\operatorname{span}\{z\}$ and an operator $T\in S_{L(X,Y^*)}$ such that $T(z)=1$. By assumption, we can find an element $x\otimes y\in S$ and an operator $G\in L(X,Y^*)$ with $\Vert G\Vert\leq (1-\varepsilon)^{-1}$, $G(z)=T(z)=1$ and $G(x\otimes y)=1$. Now
$$
\Vert z+x\otimes y\Vert\geq \frac{G(z+x\otimes y)}{\|G\|}\geq 2(1-\varepsilon).
$$
The arbitrariness of $z$, $S$ and $\varepsilon$ implies that $X\pten Y$ has the Daugavet property, as desired.
\end{proof}

In view of the previous proposition it is clear that the Daugavet property on $X\pten Y$ is strongly related to the possibility of extending operators on $L(X,Y^*)$. Using this  and the celebrated work of J. Lindenstrauss \cite{linds}, we prove the second of our main results. 


\begin{proof}[Proof of Theorem \ref{theo:maintheorem}]
In order to prove that $X\pten Y$ has the Daugavet property pick an element $z\in S_{X\pten Y}$, a slice $S=S(B_{X\pten Y},B,\alpha)$ and $\varepsilon>0$, and let us find an element $x\otimes y\in S$ such that $\Vert z+x\otimes y\Vert>2-\varepsilon$.

Choose $\eta>0$ small enough so that $\left(2-3\eta-\eta\left( \frac{1+\eta}{1-\eta}\right)^2\right)\left(\frac{1-\eta}{1+\eta} \right)^2 >2-\varepsilon$. Pick a norm-one bilinear form $G$ such that $G(z)>1-\eta$ and $x_0\in S_X,y_0\in S_Y$ such that $G(x_0,y_0)>1-\eta$.

Choose $x'\in S_X$ and $y'\in S_Y$ such that $B(x',y')>1-\alpha$. From the definition of projective norm consider $n\in\mathbb N, x_1,\ldots, x_{ n}\in X$ and $y_1,\ldots, y_{ n}\in Y$ such that
\begin{equation*}
\left\Vert z-\sum_{i=1}^{ n} x_i\otimes y_i\right\Vert<\eta.
\end{equation*}
Define $E:=\operatorname{span}
\{x_1,\ldots, x_{ n}\}\subseteq X$. Since $X$ has the Daugavet property we can find from \cite[Lemma 2.8]{kssw} an element $x\in S(B_X,B(\cdot, y'),\alpha)$ (note that the previous set defines a slice of $B_X$ since $B$ is bilinear and continuous) such that
$$\Vert e+\lambda x\Vert>(1-\eta)(\Vert e\Vert+\vert\lambda\vert)$$
holds for every $e\in E$ and every $\lambda\in\mathbb R$.

Similarly define $F:=\operatorname{span}\{y_1,\ldots, y_{ n}\}\subseteq X$. Since $Y$ has the Daugavet property we can find from \cite[Lemma 2.8]{kssw} an element $y\in S(B_Y,B(x,\cdot),\alpha)$ (note that $B(x)\in Y^*$, so the previous set defines a slice of $B_Y$) such that
$$\Vert f+\lambda y\Vert>(1-\eta)(\Vert f\Vert+\vert\lambda\vert)$$
holds for every $f\in F$ and every $\lambda\in\mathbb R$.
Notice that $B(x)(y)>1-\alpha$ which means that $x\otimes y\in S$.

Define $\psi:E\oplus\mathbb R x\longrightarrow X$ by the equation
$$\psi(e+\lambda x):=e+\lambda x_0.$$
We claim that $\Vert \psi\Vert\leq \frac{1}{1-\eta}$. Indeed, given $e\in E$ and $\lambda\in\mathbb R$, we have
$$\Vert \psi(e+\lambda x)\Vert=\Vert e+\lambda x_0\Vert\leq \Vert e\Vert+\vert\lambda\vert\leq \frac{1}{1-\eta}\Vert e+\lambda x\Vert.$$
Now, since $X$ is an $L_1$ predual and $\psi$ is a compact operator (it is actually a finite-rank operator), we can find by \cite[Theorem 6.1, (3)]{linds} an extension $\varphi:X\longrightarrow X$ such that $\Vert \varphi\Vert\leq \frac{1+\eta}{1-\eta}$.

Similarly we can construct a bounded operator $\phi:Y\longrightarrow Y$ such that $\phi(f)=f$ for every $f\in F$, $\phi(y)=y_0$ and $\Vert \phi\Vert\leq \frac{1+\eta}{1-\eta}$. Now define the bilinear form $T(u,v):=G(\varphi(u),\phi(v))$ for every $u\in X, v\in Y$. We claim that $\Vert T\Vert\leq \left( \frac{1+\eta}{1-\eta}\right)^2$. Indeed, given $u\in B_X$ and $y\in B_Y$ we have
$$\vert G(\varphi(u),\phi(v))\vert\leq \Vert G\Vert\Vert \varphi(u)\Vert \Vert \phi(v)\Vert\leq \left( \frac{1+\eta}{1-\eta}\right)^2.$$
Furthermore,
\[\begin{split}
T(z) \geq \sum_{i=1}^{ n} G(\varphi(x_i),\phi(y_i))-\eta\Vert T\Vert & >\sum_{i=1}^{ n} G(x_i)(y_i)-\eta\left( \frac{1+\eta}{1-\eta}\right)^2\\
& >1-2\eta-\eta\left( \frac{1+\eta}{1-\eta}\right)^2.
\end{split}
\]
Also
$$T(x,y)=G(x_0,y_0)>1-\eta.$$
Finally
\[
\begin{split}
\Vert z+x\otimes y\Vert&  \geq \frac{T\left( x\otimes y+z\right)}{\Vert T\Vert}\\
& \geq \frac{1-\eta+T(z)}{\Vert T\Vert}\\
& \geq \frac{2-3\eta-\eta\left( \frac{1+\eta}{1-\eta}\right)^2}{\Vert T\Vert}\\
& \geq \left(2-3\eta-\eta\left( \frac{1+\eta}{1-\eta}\right)^2\right)\left(\frac{1-\eta}{1+\eta} \right)^2\\
& >2-\varepsilon.
\end{split}
\]
Since $\varepsilon>0$ was arbitrary then $X\pten Y$ has the Daugavet property, so we are done.
\end{proof}

\begin{remark}\label{remark:longtensorpredul1}
Notice that the same idea works for the projective tensor product of any finite number of $L_1$ preduals with the Daugavet property.
\end{remark}

\section{The operator Daugavet Property}\label{section:ODP1}


In view of the proof of Theorem \ref{theo:maintheorem}, we will define and study in this section an operator version of the Daugavet property in the spirit of Proposition \ref{propmotiexten}. This property will allow us to obtain, in the next section, further results about the geometry of tensor products.

\begin{definition}\label{defi:odp}
Let $X$ be a Banach space. We will say that $X$ has the \textit{operator Daugavet property (ODP)} if, for every $x_1\ldots x_n\in S_X$, every slice $S$ of $B_X$ and every $\varepsilon>0$ there exists an element $x\in S$ such that, for every $x'\in B_X$, there exists an operator $T:X\longrightarrow X$ with $\Vert T\Vert\leq 1+\varepsilon$, $T(x)=x'$ and $\Vert T(x_i)-x_i\Vert<\varepsilon$ for every $i\in\{1,\ldots, n\}$.
\end{definition}

\begin{remark}\label{remaodpimplidp}
Notice that the ODP implies the Daugavet property. Indeed, given a Banach space $X$ with the ODP, fix $x\in S_X$, $\varepsilon>0$ and a slice $S$ of $B_X$. By hypothesis, there is an operator $T:X\longrightarrow X$ and a point $y\in S$ such that $\Vert T(x)-x\Vert<\varepsilon$, $\Vert T\Vert\leq 1+\varepsilon$ and $T(y)=x$. The existence of such operator implies that $\Vert x+y\Vert>\frac{2-\varepsilon}{1+\varepsilon}$. Nevertheless, we do not know of any example satisfying the Daugavet property but not the ODP.\end{remark}

From the proof of Theorem \ref{theo:maintheorem} the following result should be clear.

\begin{theorem}\label{theo:odpwerneranswer}
Let $X$ and $Y$ be two Banach spaces. If $X$ and $Y$ have the ODP then $X\pten Y$ has the Daugavet property.
\end{theorem}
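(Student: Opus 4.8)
The plan is to verify condition (2) of Proposition \ref{propmotiexten}, mimicking the proof of Theorem \ref{theo:maintheorem} but replacing the two appeals to Lindenstrauss's extension theorem for $L_1$-preduals with two applications of the ODP. So fix a finite-dimensional subspace $E\subseteq X\pten Y$, a slice $S=S(B_{X\pten Y},B,\alpha)$ of $B_{X\pten Y}$, an operator $T\in L(X,Y^*)$ (which we normalize to norm one, the zero case being trivial), and $\varepsilon>0$. Approximating each element of a basis of $E$ within $\eta$ by a finite sum of elementary tensors, I would collect all the resulting left-hand factors into a finite set $x_1,\dots,x_n\in S_X$ and the right-hand factors into $y_1,\dots,y_m\in S_Y$, and also throw into these lists points $x_0\in S_X$, $y_0\in S_Y$ with $B^\flat(x_0,y_0)$ close to $1$ where $B^\flat$ is the bilinear form associated to $B$, together with points $x',y'$ witnessing the slice $S$ of $B_{X\pten Y}$. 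The rough value $\eta$ is chosen at the end so that all the accumulated multiplicative losses $(1+\eta)/(1-\eta)$ and additive losses stay below $\varepsilon$.

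The core of the argument runs as follows. Since $X$ has the ODP, applied to the finite family $\{x_i\}\cup\{x_0\}$ (suitably rescaled to the sphere), to a slice of $B_X$ of the form $S(B_X, B^\flat(\cdot,y'),\alpha')$, and to $\eta$, I obtain a point $x\in S(B_X,B^\flat(\cdot,y'),\alpha')$ such that for the specified target (I will want to hit $x_0$) there is an operator $\varphi\colon X\to X$ with $\|\varphi\|\le 1+\eta$, $\varphi(x)=x_0$, and $\|\varphi(x_i)-x_i\|<\eta$ for all $i$ (and $\|\varphi(x_0)-x_0\|<\eta$). Then, with $x$ now fixed, $B^\flat(x,\cdot)\in Y^*$, so I apply the ODP of $Y$ to the family $\{y_j\}\cup\{y_0\}$, to the slice $S(B_Y, B^\flat(x,\cdot),\alpha'')$, and to $\eta$, getting $y$ in that slice and an operator $\phi\colon Y\to Y$ with $\|\phi\|\le 1+\eta$, $\phi(y)=y_0$, $\|\phi(y_j)-y_j\|<\eta$, $\|\phi(y_0)-y_0\|<\eta$. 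The choices of $\alpha',\alpha''$ relative to $\alpha$ are arranged so that $B(x\otimes y)=B^\flat(x,y)>1-\alpha$, i.e. $x\otimes y\in S$. Now define $G\in L(X,Y^*)=(X\pten Y)^*$ by $G(u\otimes v):=T(\varphi(u))(\phi(v))$, equivalently the bilinear form $(u,v)\mapsto \langle T\varphi(u),\phi(v)\rangle$; then $\|G\|\le (1+\eta)^2\|T\|\le(1-\varepsilon)^{-1}\|T\|$ after adjusting $\eta$. One checks $G(x\otimes y)=\langle T x_0, y_0\rangle$, which can be forced to equal $\|T\|$ by choosing $x_0,y_0$ to nearly norm $T$ and then, as in the proof of Theorem~\ref{theo:maintheorem}, absorbing the discrepancy; and $G$ agrees with $T$ on $E$ up to an error controlled by $\eta$ (using $\|\varphi(x_i)-x_i\|,\|\phi(y_j)-y_j\|<\eta$ and the $\eta$-approximation of $E$ by sums of tensors). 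A final Hahn–Banach perturbation on the finite-dimensional space $E$ replaces ``up to $\eta$'' by exact agreement $T=G$ on $E$ and ``$=\|T\|$'' exactly on $x\otimes y$, at the cost of an extra $(1-\varepsilon)$-type factor already budgeted for. This yields condition~\eqref{propomotiexten2}, hence by Proposition~\ref{propmotiexten} the space $X\pten Y$ has the Daugavet property.

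\textbf{Alternative, cleaner route.} Rather than going through Proposition~\ref{propmotiexten} with its exact-extension clause, it may be smoother to argue directly from the slice characterization of the Daugavet property, exactly as in the proof of Theorem~\ref{theo:maintheorem}: pick $z\in S_{X\pten Y}$, a slice $S$, and $\varepsilon>0$, approximate $z$ within $\eta$ by $\sum_{i=1}^n x_i\otimes y_i$, and run the two ODP applications above to produce $x\otimes y\in S$ together with the operators $\varphi,\phi$; then the bilinear form $T(u,v):=G(\varphi(u),\phi(v))$ — where now $G$ is a norm-one bilinear form nearly norming $z$ with $G(x_0,y_0)>1-\eta$ — satisfies $\|T\|\le(1+\eta)^2$, $T(x,y)=G(x_0,y_0)>1-\eta$, and $T(z)>1-2\eta-\eta((1+\eta)^2)$ (using $\|\varphi(x_i)-x_i\|,\|\phi(y_i)-y_i\|<\eta$), whence $\|z+x\otimes y\|\ge T(z+x\otimes y)/\|T\|>2-\varepsilon$. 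This is essentially a verbatim transcription of the proof of Theorem~\ref{theo:maintheorem} with ``$L_1$-predual + Lindenstrauss extension'' swapped for ``ODP,'' which is precisely the content of the phrase ``the following result should be clear.''

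\textbf{Main obstacle.} The bookkeeping with the slice parameters is the only delicate point: one must ensure that the slice of $B_X$ fed to the ODP of $X$ and the slice of $B_Y$ fed to the ODP of $Y$ are chosen so that the resulting elementary tensor $x\otimes y$ genuinely lands in the originally prescribed slice $S$ of $B_{X\pten Y}$ — and this is where the continuity and bilinearity of $B$, i.e. that $B^\flat(\cdot,y')$ and $B^\flat(x,\cdot)$ define slices of $B_X$ and $B_Y$, is used, exactly as in Theorem~\ref{theo:maintheorem}. A secondary point is that the ODP as stated only gives $\|\varphi(x_i)-x_i\|<\varepsilon$, not $\varphi(x_i)=x_i$; but since $E$ is only approximated to within $\eta$ anyway, this inexactness costs nothing and is absorbed into the same error budget. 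Everything else is the routine propagation of the $(1+\eta)/(1-\eta)$ factors already handled in the proof of Theorem~\ref{theo:maintheorem}.
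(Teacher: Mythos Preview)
Your proposal is correct, and your ``Alternative, cleaner route'' is precisely the argument the paper has in mind: the authors give no proof beyond the remark that the result is clear from the proof of Theorem~\ref{theo:maintheorem}, and what you describe there is exactly that proof with the two invocations of \cite[Lemma~2.8]{kssw} plus \cite[Theorem~6.1]{linds} replaced by two direct applications of the ODP (the only adjustment being that $\varphi$ and $\phi$ now satisfy $\Vert\varphi(x_i)-x_i\Vert<\eta$ rather than $\varphi(x_i)=x_i$, which, as you correctly note, costs only an extra additive $\eta$-term in the lower bound for $T(z)$). Your first route through Proposition~\ref{propmotiexten} is also valid but is an unnecessary detour: the exact-agreement clause $T=G$ on $E$ and $G(x\otimes y)=\Vert T\Vert$ forces the final Hahn--Banach correction you mention, whereas the direct slice argument avoids this entirely.
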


We will devote the remainder of this section to give examples of Banach spaces with the ODP in order to enlarge the class of spaces where Theorem \ref{theo:odpwerneranswer} applies.

It is clear from the proof of Theorem \ref{theo:maintheorem} that $L_1$ predual spaces with the Daugavet property actually have the ODP. Let us see another classical space with the Daugavet property which actually enjoys the ODP.

\begin{proposition}\label{prop:L1odp}
Let $(\Omega,\Sigma,\mu)$ be a non-atomic measure space. Then $L_1(\mu)$ has the ODP.
\end{proposition}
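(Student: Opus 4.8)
The plan is to verify the ODP directly from its definition, exploiting the geometry of slices in $L_1(\mu)$ together with the abundance of ``small'' sets afforded by non-atomicity. Fix $x_1,\dots,x_n\in S_{L_1(\mu)}$, a slice $S=S(B_{L_1(\mu)},g,\alpha)$ with $g\in S_{L_\infty(\mu)}$, and $\varepsilon>0$. Since simple functions are dense, I would first replace $x_1,\dots,x_n$ by simple functions (a $\varepsilon$-perturbation, absorbed in the final estimate), so that all of them are measurable with respect to a common finite partition of $\Omega$ into sets of positive finite measure. The point $x\in S$ will be chosen supported on a set $C$ of small measure on which $g$ is close to $1$: by non-atomicity there is a subset $C$ of $\{g>1-\alpha/2\}$ (nonempty since $\|g\|_\infty=1$ forces this set to have positive measure, after a harmless perturbation of $g$) with $\mu(C)$ as small as we like; set $x=\chi_C/\mu(C)$, which lies in $S$ provided $\mu(C)$ is small enough that $x$ is essentially concentrated where $g>1-\alpha$. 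Crucially, I can take $C$ disjoint from the (finitely many) atoms of the partition generating $x_1,\dots,x_n$ up to arbitrarily small error, so $x$ is ``almost disjointly supported'' from each $x_i$.

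Next, given $x'\in B_{L_1(\mu)}$, I would construct the operator $T$ as a sum of two pieces: one that moves $x$ to $x'$ and one that reproduces the $x_i$. Write $T=S_1+S_2$ where $S_1(f)=\big(\int_C f\,d\mu\big)\cdot \tfrac{x'}{\mu(C)}\cdot\mu(C)= \big(\int_C f\,d\mu\big)\,\tfrac{x'}{\mu(C)}$... more cleanly: let $S_1(f)=\mu(C)\,x'(f)$ where here I regard averaging $f$ over $C$ as a norm-one functional; concretely $S_1(f)=\big(\frac{1}{\mu(C)}\int_C f\,d\mu\big)x'$, a rank-one operator of norm $1$ with $S_1(x)=x'$ and $S_1(x_i)$ having $L_1$-norm at most $\frac{1}{\mu(C)}\int_C|x_i|\,d\mu$, which is small because $x_i$ is (almost) supported off $C$ and has bounded sup-norm (being simple). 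For the second piece take $S_2$ to be multiplication by $\chi_{\Omega\setminus C'}$ for a slightly larger set $C'\supseteq C$ of small measure, or more simply $S_2(f)=\chi_{\Omega\setminus C}f$; then $S_2$ has norm $1$, $S_2(x)=0$, and $\|S_2(x_i)-x_i\|_1=\int_C|x_i|\,d\mu$ is again small. Summing, $T=S_1+S_2$ satisfies $T(x)=x'$, $\|T(x_i)-x_i\|_1<\varepsilon$ for the right choice of $\mu(C)$, and $\|T\|\leq\|S_1\|+\|S_2\|$ — which is $2$, not $1+\varepsilon$, so this crude splitting is not yet good enough.

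The main obstacle, therefore, is controlling $\|T\|$ down to $1+\varepsilon$ rather than $2$. The fix is to make $S_1$ and $S_2$ act on (almost) disjoint parts of the domain: instead of letting $S_2$ be the identity off $C$, I would route the mass living on $\Omega\setminus C$ through $x'$ only via the part that $S_1$ sees, and keep the two operators disjointly supported on the range side as well. Concretely, choose $C$ small, let $S_1(f)=\big(\frac1{\mu(C)}\int_C f\,d\mu\big)x'$ as above but now also post-compose with multiplication by $\chi_C$ after first relocating $x'$ into a copy of itself living on a small set disjoint from everything — using non-atomicity to find, inside any prescribed small set, a measurable bijection identifying it (measure-theoretically, up to scaling) with $(\Omega,\mu)$, so that a norm-one isomorphic copy $\tilde x'$ of $x'$ sits there. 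Then $T(f)=\big(\frac1{\mu(C)}\int_C f\,d\mu\big)\tilde x' + \chi_{\Omega\setminus(C\cup D)}f$ where $D$ carries $\tilde x'$; since the two summands have disjoint ranges and the first only reads $f|_C$ while the second only reads $f|_{\Omega\setminus(C\cup D)}$, one gets $\|T(f)\|_1\le \frac1{\mu(C)}\int_C|f| + \int_{\Omega\setminus(C\cup D)}|f|\le \|f\|_1$, i.e. $\|T\|\le 1$. One checks $T(x)=\tilde x'$, which is a norm-one isometric-up-to-$\varepsilon$ copy of $x'$ — and here I would either argue that ``$T(x)=x'$'' can be arranged exactly by choosing $D$ so that $\tilde x'$ literally equals $x'$ (possible when $x'$ itself is, after perturbation, supported on a small set, which by non-atomicity one may assume), or absorb the discrepancy into $\varepsilon$ by a final perturbation. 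In all cases $\|T(x_i)-x_i\|_1=\int_{C\cup D}|x_i|\,d\mu<\varepsilon$ once $\mu(C\cup D)$ is small compared with $\varepsilon/(n\max_i\|x_i\|_\infty)$, completing the verification.
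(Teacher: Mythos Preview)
Your first attempt is essentially the paper's argument, and it already works; the ``obstacle'' you flag is not there. The bound $\|T\|\le\|S_1\|+\|S_2\|\le 2$ is simply wasteful. With the correct rank-one piece $S_1(f)=\bigl(\int_C f\,d\mu\bigr)\,x'$ (your formula carries a spurious $1/\mu(C)$; note that with $x=\chi_C/\mu(C)$ one has $\int_C x\,d\mu=1$, so this $S_1$ already sends $x$ to $x'$ and has norm $\le 1$), the operator $T=S_1+S_2$ satisfies
\[
\|T(f)\|_1\;\le\;\Bigl|\int_C f\,d\mu\Bigr|\,\|x'\|_1+\int_{\Omega\setminus C}|f|\,d\mu\;\le\;\int_C|f|\,d\mu+\int_{\Omega\setminus C}|f|\,d\mu\;=\;\|f\|_1,
\]
because $S_1$ reads only $f|_C$ and $S_2$ reads only $f|_{\Omega\setminus C}$. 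So $\|T\|\le 1$, $T(x)=x'$, and $\|T(x_i)-x_i\|_1\le \|S_1(x_i)\|_1+\|\chi_C x_i\|_1\le 2\int_C|x_i|\,d\mu<\varepsilon$ once $C$ is chosen small enough. This is exactly the mechanism in the paper; the only cosmetic difference is that the paper replaces your $S_2(f)=\chi_{\Omega\setminus C}f$ by the conditional expectation onto a finite partition carrying the (simple approximations of the) $x_i$, but the disjoint-reading norm estimate is identical.

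Your second attempt, by contrast, has a real gap. In the ODP the element $x$ is fixed first and then one must produce, for \emph{every} $x'\in B_{L_1(\mu)}$, an operator with $T(x)=x'$ exactly. You are not allowed to perturb $x'$, nor to replace it by an isometric copy $\tilde x'$ supported on a small set: $x'$ may perfectly well be, say, $\chi_\Omega/\mu(\Omega)$ (or any function with full support), and neither of your proposed fixes handles that case. Fortunately this detour is unnecessary once the norm of $T$ in the first construction is computed correctly.
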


\begin{proof}
Let us write $X=L_1(\mu)$ for short. Consider $x_1,\ldots, x_n\in B_X, \varepsilon>0$ and a slice $S=S(B_X,\varphi,\alpha)$ of $B_X$. We can assume with no loss of generality that $\varepsilon<\alpha$. To begin with, we can assume $\varphi\in X^*$ has norm one, and pick some $f_0\in S$. Now, since $f_0\in L_1(\mu)$ has $\sigma$-finite support, we can find $g\in L_\infty(\mu)$ such that 
$$
\varphi(f)= \int_\Omega fg d\mu,
$$
for every $f\in L_1(\mu)$ whose support is included in that of $f_0$.
In particular, this means that there exists $A\in \Sigma$ contained in the support of $f_0$, with $0<\mu(A)<\infty$, $\vert g(t)\vert>1-\varepsilon$ for every $t\in A$ and $\operatorname{sign}(g_{|A})$ constant.

Since $\mu$ does not contain any atom, we can assume with no loss of generality that there exists a subset $B\subseteq A$ which $\mu(B)>0$ and such that 
$$
\|x_i\chi_B\|<\frac{\varepsilon}{4},
$$
for every $i\in\{1,\ldots, n\}$.

Now, for every $i\in\{1,\ldots, n\}$, we can find a simple functions $x_i'\in S_X$ such that
\begin{equation}\label{L1aproxsimple}
\Vert x_i\chi_{\Omega\backslash B}-x_i'\Vert<\frac{\varepsilon}{4},
\end{equation}
where $x_i'=\sum_{j=1}^{m} a_{ij}\chi_{A_{j}}$ for suitable $m\in\mathbb N$, $a_{ij}\in\mathbb R$ and pairwise disjoint $A_{j}\in \Sigma$ with $B\cap A_j=\emptyset$ for $j\in\{1,\ldots, m\}$. 

Let also $f_B:=\frac{1}{\mu(B)}\operatorname{sign}(g_{|B})\chi_B$. Note that
$$\varphi(f_B)=\frac{1}{\mu(B)}\int_\Omega g(t)\operatorname{sign}(g_{|B})\chi_B\ d\mu=\frac{1}{\mu(B)}\int_B \vert g(t)\vert\ d\mu\geq 1-\varepsilon>1-\alpha,$$
from where $f_B\in S$.

Now, in order to prove that $X$ has the ODP, pick an element $x'\in B_X$ and let us construct an operator $T:X\longrightarrow X$ satisfying the desired requirements. To this end, consider the operator $T:X\longrightarrow X$ given by the equation
$$
T(f):=\sum_{j=1}^m \frac{1}{\mu(A_j)}\int_{A_j} f\ d\mu\ \chi_{A_j}+\int_B f\ d\mu\  \operatorname{sign}(g_{|B}) x'.
$$
It is clear from the disjointness of the sets $B, A_1,\ldots, A_m$ and the fact that $\Vert x'\Vert\leq 1$ that $\Vert T(f)\Vert\leq \Vert f\Vert$ holds for every $f\in X$. Furthermore, it is clear from the definition that $T(x_i')=x_i'$ , so
\[
\begin{split}
\Vert T(x_i)-x_i\Vert& \leq  \Vert T(x_i-x_i')\Vert+\Vert x_i'-x_i\Vert\\
& \leq \Vert T\Vert \Vert x_i-x_i'\Vert+\Vert x_i-x_i'\Vert\\
& \mathop{\leq}\limits^{\mbox{
(\ref{L1aproxsimple})}}\varepsilon.\end{split}
\]
On the other hand,
$$T(f_B)=\frac{1}{\mu(B)}\operatorname{sign}(g_{|B})^2\int_B \chi_B\ d\mu x'=x',$$
 and the proof is finished.
\end{proof}

\begin{remark}\label{remark:L1}
\begin{enumerate}
    \item Notice that an adaptation of the proof of Proposition \ref{prop:L1odp} yields that $L_1(\mu,X)$ has the ODP whenever $\mu$ does not contain any atom regardless of the Banach space $X$.
    \item The fact that $L_1$-preduals with the Daugavet property have the ODP can be considered in part as a consequence of the fact that these are injective Banach spaces. The situation with $L_1(\mu)$ spaces can also be regarded as somehow similar, since these spaces are in turn injective as Banach lattices (cf. \cite{lotz, mn}).
\end{enumerate}

\end{remark}


It is known that if $X$ and $Y$ have the Daugavet property then so does $X\oplus_\infty Y$ \cite[Theorem 1]{woj}. The following proposition proves that the same holds for the ODP.

\begin{proposition}
Let $X$ and $Y$ be two Banach spaces with ODP. Then $X\oplus_\infty Y$ has ODP.
\end{proposition}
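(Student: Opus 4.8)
The plan is to verify the defining property of the ODP for $Z:=X\oplus_\infty Y$ directly, reducing everything to the ODP of $X$ and of $Y$ separately. Fix $z_1,\dots,z_n\in S_Z$, a slice $S=S(B_Z,\Phi,\alpha)$ of $B_Z$, and $\varepsilon>0$. Write $z_i=(u_i,v_i)$ and note that $\|z_i\|=\max\{\|u_i\|,\|v_i\|\}=1$, so by a small perturbation (and enlarging the list) we may assume each $u_i,v_i$ is either $0$ or has norm exactly $1$; harmlessly rescale so that all the vectors appearing belong to the appropriate unit spheres. The dual of $Z$ is $X^*\oplus_1 Y^*$, so $\Phi=(\phi,\psi)$ with $\|\phi\|+\|\psi\|=1$, and $\sup\Phi(B_Z)=\|\phi\|+\|\psi\|=1$. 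The first step is to observe that a slice of $B_Z$ "splits": if $x\in S(B_X,\phi,\alpha')$ and $y\in S(B_Y,\psi,\alpha')$ for a suitable $\alpha'$ comparable to $\alpha$ (one has to treat the degenerate cases $\phi=0$ or $\psi=0$ separately, using any norm-one functional there), then $(x,y)\in S$. So it suffices to hit these two smaller slices of $B_X$ and $B_Y$.

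Next I would apply the ODP of $X$ to the finite set $\{u_i:\|u_i\|=1\}\cup\{x_0\}$ for some fixed $x_0\in S_X$, the slice $S(B_X,\phi,\alpha')$, and a parameter $\varepsilon'$ to be chosen (something like $\varepsilon'=\varepsilon/3$): this yields $x\in S(B_X,\phi,\alpha')$ such that every $x'\in B_X$ is the image $T_X(x)=x'$ of some operator $T_X$ on $X$ with $\|T_X\|\le 1+\varepsilon'$ and $\|T_X(u_i)-u_i\|<\varepsilon'$ for the relevant $i$ (and we may also arrange $\|T_X(x_0)-x_0\|<\varepsilon'$, though that will not be needed). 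Symmetrically, ODP of $Y$ gives $y\in S(B_Y,\psi,\alpha')$ with the analogous property relative to $\{v_i:\|v_i\|=1\}$. By the splitting observation, $z:=(x,y)\in S$. This $z$ is my candidate.

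Now fix an arbitrary $z'=(x',y')\in B_Z$, so $\|x'\|\le 1$ and $\|y'\|\le 1$. Choose $T_X:X\to X$ with $\|T_X\|\le 1+\varepsilon'$, $T_X(x)=x'$, $\|T_X(u_i)-u_i\|<\varepsilon'$ (for $u_i$ of norm one), and $T_Y:Y\to Y$ with $\|T_Y\|\le 1+\varepsilon'$, $T_Y(y)=y'$, $\|T_Y(v_i)-v_i\|<\varepsilon'$; these exist by the conclusion of the previous paragraph applied to $x'$ and $y'$. Define $T:Z\to Z$ by $T(a,b):=(T_X a,T_Y b)$. Then $\|T\|=\max\{\|T_X\|,\|T_Y\|\}\le 1+\varepsilon'\le 1+\varepsilon$, and $T(z)=(T_X x,T_Y y)=(x',y')=z'$. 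Finally $\|T(z_i)-z_i\|=\max\{\|T_X u_i-u_i\|,\|T_Y v_i-v_i\|\}$; when $\|u_i\|=1$ this first term is $<\varepsilon'$ and when $u_i=0$ it is $0$, and similarly for the second term, so $\|T(z_i)-z_i\|<\varepsilon'\le\varepsilon$. This verifies the ODP for $Z$.

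The only genuinely delicate point is the "splitting of a slice" step at the beginning: one must check that $S(B_X,\phi,\alpha')\times S(B_Y,\psi,\alpha')\subseteq S(B_Z,(\phi,\psi),\alpha)$ for an appropriate choice of $\alpha'$, and handle the possibility that $\phi$ or $\psi$ vanishes (in which case one replaces the zero functional by an arbitrary norm-one functional on that factor, since any element of the unit ball of that factor is then admissible). Everything else is a routine bookkeeping of $\varepsilon$'s, using that both the operator norm and the norm on $Z$ are maxima over the two coordinates. The perturbation reducing to $u_i,v_i\in S_X\cup\{0\}$, $S_Y\cup\{0\}$ is standard and I would only sketch it.
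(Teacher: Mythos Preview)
Your proof is correct and follows essentially the same route as the paper: split the slice of $B_{X\oplus_\infty Y}$ into a product of slices of $B_X$ and $B_Y$, apply the ODP on each factor, and combine via the diagonal operator $T(a,b)=(T_Xa,T_Yb)$. You are actually more careful than the paper about two points it glosses over---the degenerate cases $\phi=0$ or $\psi=0$, and the fact that the coordinates $u_i,v_i$ need not lie in the unit spheres---but the argument is the same.
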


\begin{proof}
Let $(x_1,y_1),\ldots, (x_n,y_n)\in S_{X\oplus_\infty Y}$, $\varepsilon>0$, and a slice
$$
R=S(B_{X\oplus_\infty Y},(x^*,y^*),\alpha).
$$
Let us find $(x,y)\in R$ such that, for every $(x',y')\in B_{X\oplus_\infty Y}$, there exists an operator $T:X\oplus_\infty Y\longrightarrow X\oplus_\infty Y$ satisfying the desired requirements. 

Since $X$ has the ODP, there exist an element $x\in \{z\in B_X: x^*(z)>\Vert x^*\Vert-\frac{\alpha}{2}\}$ (which is a slice of $B_X$) such that, for every $x'\in B_X$, there exists an operator $G:X\longrightarrow X$ with $\Vert G\Vert\leq 1+\varepsilon$, $\Vert G(x_i)-x_i\Vert\leq \varepsilon$ and $G(x)=x'$.

Repeating the same argument on the factor $Y$ find $y\in \{z\in B_Y: y^*(z)>\Vert y^*\Vert-\frac{\alpha}{2}\}$ (which is a slice of $B_Y$) such that, for every $y'\in B_Y$, there exists an operator $S:Y\longrightarrow Y$ with $\Vert S\Vert\leq 1+\varepsilon$, $\Vert S(y_i)-y_i\Vert\leq \varepsilon$ and $S(y)=y'$. 

Consider $(x,y)\in B_{X\oplus_\infty Y}$. Then
$$(x^*,y^*)( x, y)= x^*(x)+ y^*(y)>\Vert x^*\Vert+\Vert y^*\Vert-\alpha=1-\alpha,$$
which means that $(x,y)\in R$. In order to finish the proof let us show that $(x,y)$ satisfies the desired requirements. To this end, pick $(x',y')\in B_{X\oplus_\infty Y}$. Note that $\Vert(x',y')\Vert=\max\{\Vert x'\Vert,\Vert y'\Vert\} = 1$, which means that both $\Vert x'\Vert$ and $\Vert y'\Vert$ are less than or equal to $1$. Consider $G:X\longrightarrow X$ and $S:Y\longrightarrow Y$ with the properties described above associated to $x'$ and $y'$ respectively, and define $T:X\oplus_\infty Y \longrightarrow X\oplus_\infty Y$ given by $T(u,v)=(G(u),S(v))$ for every $u\in X, v\in Y$. First of all, given $u\in X, v\in Y$, we get
$$\Vert T(u,v)\Vert=\max\{\Vert G(u)\Vert,\Vert S(v)\Vert\}\leq (1+\varepsilon)\max\{\Vert u\Vert,\Vert v\Vert\}=(1+\varepsilon)\Vert (u,v)\Vert_\infty,$$
so $\Vert T\Vert\leq 1+\varepsilon$. Moreover,
$$
\Vert T(x_i,y_i)-(x_i,y_i)\Vert=\max\{\Vert G(x_i)-x_i\Vert,\Vert S(y_i)-y_i\Vert\}\leq\varepsilon.
$$
Finally notice that
$$T(x,y)=(x',y'),$$
which finishes the proof.
\end{proof}

In order to summarise the content of the section and to relate it with Question \ref{questiondproyect} we get the following corollary.

\begin{corollary}\label{cor:resumen}
Let $X$ and $Y$ be two Banach spaces with the Daugavet property. Then $X\pten Y$ has the Daugavet property if $X$ and $Y$ satisfy any of the following requirements:
\begin{enumerate}
    \item To be an $L_1$-predual.
    \item To be an $\ell_\infty$ sum of $L_1(\mu)$ spaces.
    \item To be an $\ell_\infty$ sum of an $L_1(\mu)$ space and an $L_1$-predual.
\end{enumerate}
\end{corollary}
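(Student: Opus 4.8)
The plan is to derive Corollary \ref{cor:resumen} directly from Theorem \ref{theo:odpwerneranswer} together with the ODP results established earlier in the section. By Theorem \ref{theo:odpwerneranswer}, it suffices to show that, under each of the three hypotheses, both $X$ and $Y$ have the ODP. For (1), this is precisely the observation recorded just before Proposition \ref{prop:L1odp}: the proof of Theorem \ref{theo:maintheorem} actually shows that an $L_1$-predual with the Daugavet property has the ODP, because the norm-preserving-up-to-$\varepsilon$ extension of finite-rank operators furnished by \cite[Theorem 6.1]{linds} is exactly what the definition of ODP demands. For (2), one needs that an $\ell_\infty$-sum of $L_1(\mu)$ spaces has the ODP; this follows from Proposition \ref{prop:L1odp} (each summand $L_1(\mu)$ has the ODP, noting that if the sum has the Daugavet property then at least one summand is non-atomic, and in fact all the $L_1(\mu_k)$ appearing must be non-atomic for the sum to have the Daugavet property) combined with the stability of the ODP under $\ell_\infty$-sums proved in the Proposition immediately preceding this Corollary, applied inductively to finite sums and then passed to arbitrary $\ell_\infty$-sums by a routine localisation argument. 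For (3) one combines the two ingredients: an $\ell_\infty$-sum of an $L_1(\mu)$ space (non-atomic) and an $L_1$-predual with the Daugavet property has the ODP, again by the $\ell_\infty$-sum stability proposition.

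Concretely, I would first observe that in each case the spaces in question inherit the Daugavet property from the hypotheses — for $L_1$-preduals and $L_1(\mu)$ spaces this is classical, and the $\ell_\infty$-sum case is \cite[Theorem 1]{woj} — and that this forces the relevant atomless condition on any $L_1(\mu)$-factor. Then I would invoke the chain: $L_1$-preduals with the DP have the ODP (from the proof of Theorem \ref{theo:maintheorem}); non-atomic $L_1(\mu)$ spaces have the ODP (Proposition \ref{prop:L1odp}); the ODP is stable under $\ell_\infty$-sums (the preceding Proposition, extended from pairs to arbitrary families). Finally, Theorem \ref{theo:odpwerneranswer} delivers the Daugavet property of $X\pten Y$ in all three cases.

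The only genuinely non-formal point is the passage from the two-summand $\ell_\infty$-sum stability statement, as literally stated in the preceding Proposition, to arbitrary $\ell_\infty$-sums $(\bigoplus_k Z_k)_{\ell_\infty}$ with each $Z_k$ having the ODP. The argument is standard: given $x_1,\dots,x_n$ in the sphere, $\varepsilon>0$, and a slice determined by $(z_k^*)_k$, one truncates to finitely many coordinates carrying essentially all the norm of the functional, applies the finite version (obtained by iterating the Proposition) on that block and the identity elsewhere, and patches. I would present this as a short remark rather than a full proof, since the mechanics mirror exactly the two-summand case already written out in detail.

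\begin{proof}[Proof of Corollary \ref{cor:resumen}]
By Theorem \ref{theo:odpwerneranswer} it is enough to check that in each of the three cases both $X$ and $Y$ have the ODP.

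(1) As noted after the proof of Theorem \ref{theo:maintheorem}, every $L_1$-predual with the Daugavet property has the ODP; indeed, the extension step in that proof, furnished by \cite[Theorem 6.1, (3)]{linds}, provides exactly the operators required by Definition \ref{defi:odp}.

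(2) Let $X=(\bigoplus_{k} L_1(\mu_k))_{\ell_\infty}$ have the Daugavet property. Then each $L_1(\mu_k)$ must have the Daugavet property (a point mass in some factor would produce an $L$-summand and destroy the property), hence each $\mu_k$ is non-atomic, and by Proposition \ref{prop:L1odp} each $L_1(\mu_k)$ has the ODP. Iterating the Proposition preceding this Corollary gives the ODP for every finite $\ell_\infty$-sum of the $L_1(\mu_k)$, and a routine truncation argument (approximate a given slice functional by one supported on finitely many coordinates, apply the finite case on that block and the identity on the rest) extends this to the full $\ell_\infty$-sum. Thus $X$, and likewise $Y$, has the ODP.

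(3) If $X=(L_1(\mu))\oplus_\infty Z$ with $Z$ an $L_1$-predual and the whole space has the Daugavet property, then as above $\mu$ is non-atomic and $Z$ has the Daugavet property. By Proposition \ref{prop:L1odp} the factor $L_1(\mu)$ has the ODP, by (1) the factor $Z$ has the ODP, and by the preceding Proposition their $\ell_\infty$-sum $X$ has the ODP; similarly for $Y$.

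In all three cases $X$ and $Y$ have the ODP, so Theorem \ref{theo:odpwerneranswer} yields that $X\pten Y$ has the Daugavet property.
\end{proof}
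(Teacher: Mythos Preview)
Your proposal is correct and follows exactly the route the paper intends; the corollary is stated there as a summary of the section with no proof, and you have simply spelled out the evident chain (ODP for the building blocks via the proof of Theorem~\ref{theo:maintheorem} and Proposition~\ref{prop:L1odp}, $\ell_\infty$-stability from the preceding Proposition, then Theorem~\ref{theo:odpwerneranswer}).

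One small correction: the parenthetical ``a point mass \dots\ would produce an $L$-summand and destroy the property'' is not the right justification---$L$-summands do not in general kill the Daugavet property (witness $L_1[0,1]=L_1[0,\tfrac12]\oplus_1 L_1[\tfrac12,1]$). What you actually need, and what follows readily from the slice characterisation, is that the Daugavet property of an $\ell_\infty$-sum passes to each summand; this is what forces every $\mu_k$ to be non-atomic.
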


\section{Further consequences of the ODP}\label{section:appodp}

In this section we obtain further connections between the ODP and different questions about the geometry of tensor product spaces.

\subsection{Daugavet property in the projective symmetric tensor product}

Given a Banach space $X$, we define the
\textit{($N$-fold) projective symmetric tensor product} of $X$, denoted by
$\widehat{\otimes}_{\pi,s,N} X$, as the completion of the space
$\otimes^{s,N}X$ under the norm
\begin{equation*}
   \Vert u\Vert:=\inf
   \left\{
      \sum_{i=1}^n \vert \lambda_i\vert \Vert x_i\Vert^N :
      u:=\sum_{i=1}^n \lambda_i x_i^N, n\in\mathbb N, x_i\in X
   \right\}.
\end{equation*}
The dual, $(\widehat{\otimes}_{\pi,s,N} X)^*=\mathcal P(^N X)$, is
the Banach space of $N$-homogeneous continuous polynomials on $X$, and notice that $B_{\widehat{\otimes}_{\pi,s,N} X}=\overline{\co}(\{x^N:x\in S_X\})$ (see \cite{flo} for background).

As far as we are concerned, no non-trivial example of projective symmetric tensor product with the Daugavet property is known. In the sequel, we will provide one such example using  the ODP. In order to do so, let us introduce a bit of notation. Recall that the norm of a Banach space $X$ is said to be \textit{octahedral} if, whenever $Y$ is a finite-dimensional subspace of $X$ and $\varepsilon>0$, there exists $x\in S_X$ such that
$$\Vert y+\lambda x\Vert>(1-\varepsilon)(\Vert y\Vert+\vert\lambda\vert)$$
holds for every $y\in Y$ and every $\lambda\in\mathbb R$. Daugavet property implies octahedrality by \cite[Lemma 2.8]{kssw}, but the converse is not true as the norm of $\ell_1$ is octahedral. Now we can prove the following result.

\begin{theorem}\label{theo:symmocta}
Let $X$ be a Banach space with the ODP and let $N\in\mathbb N$. Then the $N$-fold symmetric projective tensor product, $\widehat{\otimes}_{\pi, s, N} X$, has an octahedral norm.
\end{theorem}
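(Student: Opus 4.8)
The plan is to exhibit, for any finite-dimensional subspace $Y\subseteq Z:=\widehat{\otimes}_{\pi,s,N} X$ and any $\varepsilon>0$, an octahedrality witness of the very concrete form $u=x^N$ with $x\in S_X$, the vector $x$ being produced by the ODP of $X$.

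First I would reduce the statement. Because $Y$ is a linear subspace, the octahedrality inequality $\|w+\lambda u\|>(1-\varepsilon)(\|w\|+|\lambda|)$ for all $w\in Y$, $\lambda\in\mathbb R$, is equivalent to the single-sign inequality $\|w+u\|>(1-\varepsilon)(\|w\|+1)$ for all $w\in Y$, and the latter need only be checked on a fixed finite $\varepsilon$-net of $S_Y$ (the passage back to all of $Y$ being a routine triangle-inequality/homogeneity argument that uses that $Y$ is a subspace). In particular, no distinction between even and odd $N$ will arise. Next, since the $N$-th powers $\{z^N:z\in X\}$ span a dense subspace of $Z$ by polarisation, I may perturb the net and assume that each of its points has the form $w_j=\sum_{i=1}^k\lambda_i^{(j)}x_i^N$ with $x_1,\dots,x_k\in S_X$ common to all $j$ and $\|w_j\|=1$; write $\Lambda:=\max_j\sum_i|\lambda_i^{(j)}|$, a finite quantity fixed at this stage.

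Now comes the use of the ODP, which is the heart of the argument. Fix a small $\delta>0$, to be specified at the end in terms of $\varepsilon,N,\Lambda$. Applying the ODP of $X$ to the finite set $\{x_1,\dots,x_k\}\subseteq S_X$, an arbitrary slice of $B_X$, and the parameter $\delta$, I obtain $x\in S_X$ such that for every $x'\in B_X$ there is an operator $T_{x'}:X\to X$ with $\|T_{x'}\|\le 1+\delta$, $T_{x'}(x)=x'$ and $\|T_{x'}(x_i)-x_i\|<\delta$ for all $i$. Put $u:=x^N$, so $\|u\|_Z=1$. For each $j$ choose a norm-one polynomial $P_j\in\mathcal P(^N X)=Z^*$ with $\langle P_j,w_j\rangle>1-\delta$; since $B_Z=\overline{\co}(\{z^N:z\in S_X\})$ we have $\langle P_j,w_j\rangle\le\sup_{z\in S_X}P_j(z)$, so there is $z_j\in S_X$ with $P_j(z_j)>1-\delta$. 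The key step is to transport $P_j$ along the ODP operator that moves $x$ onto $z_j$: set $Q_j:=P_j\circ T_{z_j}\in\mathcal P(^N X)$, so that $\|Q_j\|\le\|P_j\|\,\|T_{z_j}\|^N\le(1+\delta)^N$. Then on the one hand $\langle Q_j,u\rangle=Q_j(x)=P_j(z_j)>1-\delta$, while on the other hand $\langle Q_j,w_j\rangle=\sum_i\lambda_i^{(j)}P_j(T_{z_j}x_i)$, which — using $\|T_{z_j}x_i-x_i\|<\delta$ and that $P_j$ is Lipschitz on $2B_X$ with a constant $C_N$ depending only on $N$ — differs from $\langle P_j,w_j\rangle$ by at most $C_N\Lambda\delta$, so $\langle Q_j,w_j\rangle>1-\delta-C_N\Lambda\delta$. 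Hence, for every $j$,
\[
\|w_j+u\|_Z\ \ge\ \frac{\langle Q_j,\,w_j+u\rangle}{\|Q_j\|}\ \ge\ \frac{2-(2+C_N\Lambda)\delta}{(1+\delta)^N},
\]
and choosing $\delta$ small enough forces the right-hand side to exceed $2-\varepsilon$. By the reduction above, this proves that the norm of $\widehat{\otimes}_{\pi,s,N} X$ is octahedral.

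I expect the genuinely delicate points to be bookkeeping rather than conceptual: (i) organising the chain of perturbations — finite net of $S_Y$, replacement of net points by elements of $\operatorname{span}\{x_1^N,\dots,x_k^N\}$, renormalisation, and finally the homogenisation back to all of $Y$ — so that the successive losses in $\varepsilon$ stay under control; and (ii) the quantitative Lipschitz estimate for $P_j$ on bounded sets, which is where the degree $N$ enters the constants (here one invokes the polarisation inequality for the associated symmetric $N$-linear form). The one genuinely new ingredient is the composition $Q_j=P_j\circ T_{z_j}$: the ODP operator simultaneously pushes the fixed vector $x$ onto a point $z_j$ where $P_j$ nearly attains its norm and leaves $x_1,\dots,x_k$ (hence $w_j$) almost unchanged, which is exactly what lets the single element $u=x^N$ work for all the $w_j$ at once — and it is this ``moving'' feature, unavailable from the bare Daugavet property of $X$, that makes the ODP the right hypothesis.
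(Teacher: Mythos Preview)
Your proposal is correct and follows essentially the same route as the paper. The paper likewise reduces to finitely many points $z_1,\dots,z_n\in S_{\widehat{\otimes}_{\pi,s,N}X}$ (citing \cite[Proposition~2.1]{hlp} rather than spelling out the net argument), approximates each $z_i$ by a combination $\sum_j\lambda_{ij}x_{ij}^N$, applies the ODP to the full collection $\{x_{ij}\}$ to produce a single $x$ together with operators $\varphi_i$ sending $x$ to points $v_i$ where $P_i(v_i)>1-\varepsilon$, and then evaluates the composed polynomials $Q_i:=P_i\circ\varphi_i$ on $z_i$ and on $x^N$; your $Q_j=P_j\circ T_{z_j}$ is exactly this step. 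The remaining differences---your use of a common list $x_1,\dots,x_k$ for all $w_j$ and your explicit Lipschitz bound for $P_j$ on $2B_X$---are purely organisational.
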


\begin{proof}In order to save notation define $Y:=\widehat{\otimes}_{\pi, s, N} X$. In view of \cite[Proposition 2.1]{hlp} it is enough to prove that, given $z_1,\ldots, z_n\in S_Y$ and $\varepsilon>0$ we can find $x\in S_X$ such that
$$\left\Vert z_i+x^N\right\Vert>2-\varepsilon$$
holds for every $i\in\{1,\ldots, n\}$. Fix $i\in\{1,\ldots, n\}$ and choose a norm-one polynomial $P_i$ such that $P_i(z_i)=1$. Furthermore, since $B_Y=\overline{\operatorname{co}}(\{x^N:x\in S_X\})$ choose $v_i\in S_X$ such that $P_i(v_i)>1-\varepsilon$. Furthermore, find $\sum_{j=1}^{n_i}\lambda_{ij}x_{ij}^N\in \operatorname{co}(\{x^N:x\in S_X\})$ satisfying that
\begin{equation}\label{ecuasimapprox}
   \left\Vert  z-\sum_{j=1}^{n_i}\lambda_{ij}
x_{ij}^N\right\Vert<\varepsilon.
\end{equation}
Since $X$ has the ODP we can find an element $x\in S_X$ and operators $\varphi_i:X\longrightarrow X$ such that 
\begin{equation}\label{ecuasimpuntoscerca}\Vert \varphi_i(x_{ij})-x_{ij}\Vert<\varepsilon
\end{equation}
holds for every $j\in\{1,\ldots, n_i\}$, $\varphi_i(x)=v_i$ and $\Vert \varphi_i\Vert\leq 1+\varepsilon$. Now define $Q_i:=P_i\circ \varphi_i:X\longrightarrow \mathbb R$. Notice that $Q_i$ is a $N$-homogeneous polynomial. In fact, if we denote by $\hat{P_i}$ the $N$-linear form associated to $P_i$ (i.e. $P_i(z)=\hat{P_i}(z,z,\ldots z)$, then $\hat Q_i(z_1,\ldots, z_N)=\hat P_i(\varphi_i(z_1),\varphi_i(z_2),\ldots, \varphi_i(z_N))$, which is an $N$-linear form because of the linearity and continuity of $\varphi_i$). Furthermore, in order to estimate the polynomial norm of $ Q_i$, pick $x\in X$. Then
$$\vert Q_i(x)\vert=\vert P_i(\varphi_i(x))\vert\leq \Vert P_i\Vert \Vert \varphi_i(x)\Vert^N\leq \Vert \varphi_i\Vert^N \Vert x\Vert^N\leq (1+\varepsilon)^N\Vert x\Vert^N.$$
So $\Vert Q_i\Vert\leq (1+\varepsilon)^N$. Furthermore
\[
\begin{split}Q_i(z_i)& \geq \sum_{j=1}^{n_i}\lambda_{ij}Q_i(x_{ij})-
(1+\varepsilon)^N\left\Vert z-\sum_{j=1}^{n_i}\lambda_{ij}
x_{ij}^N\right\Vert\\
& \mathop{>}\limits^{\mbox{\tiny(\ref{ecuasimapprox})}}\sum_{j=1}^{n_i} \lambda_{ij} P_i(x_{ij})-\Vert P_i\Vert\Vert \varphi_i(x_{ij})-x_{ij}\Vert^N-(1+\varepsilon)^N
\varepsilon^N\\
& \mathop{>}\limits^{\mbox{\tiny(\ref{ecuasimpuntoscerca})}}\sum_{j=1}^{n_i}\lambda_{ij}P_i(x_{ij})-\varepsilon^N-
(1+\varepsilon)^N\varepsilon\\
& \mathop{>}\limits^{\mbox{\tiny(\ref{ecuasimapprox})}} P_i(z_i)-
\varepsilon-\varepsilon^N-(1+\varepsilon)^N\varepsilon^N\\
& =1-
\varepsilon-\varepsilon^N-(1+\varepsilon)^N\varepsilon^N.
\end{split}
\]
Moreover
$$Q_i(x)=P_i(v_i)>1-\varepsilon.$$
Hence
$$\left\Vert z_i+x^N\right\Vert\geq \frac{Q_i(z_i+x^N)}{\Vert Q_i\Vert}>\frac{2-2\varepsilon-\varepsilon^N-(1
+\varepsilon)^N\varepsilon^N}{(1+\varepsilon)^N}.$$
Since $i$ and $\varepsilon$ were arbitrary we conclude that the norm of $Y$ is octahedral, as desired.
\end{proof}

\begin{remark}\label{remarkdaugasym}
 Let $X$ be a Banach space with the ODP. Given $x_1,\ldots, x_n\in S_X, x'\in B_X$ and $\varepsilon>0$, consider the set $A$ of those $x\in B_X$ for which there exists a bounded operator $\varphi:X\longrightarrow X$ such that $\Vert \varphi(x_i)-x_i\Vert<\varepsilon, \varphi(x)=x'$ and $\Vert \varphi\Vert\leq 1+\varepsilon$. Then, in order to ensure that $\widehat{\otimes}_{\pi,s,N}X$ has the Daugavet property by an adaptation of the proof of Theorem \ref{theo:symmocta} we need to guarantee that the set $\{x^N:x\in A\}$ is norming for $\mathcal P(^N X)$ for every $x_1,\ldots, x_n,x'\in B_X$ and every $\varepsilon>0$.
\end{remark}

Although we do not know whether the property exhibited in the preceding remark holds in general for every ODP space, we will prove in the following proposition that this is the case for spaces of continuous functions.

\begin{proposition}\label{propo:tensosime}
Let $K$ be a compact Hausdorff space without isolated points. Then $\widehat{\otimes}_{\pi,s,N}\mathcal C(K)$ has the Daugavet property.
\end{proposition}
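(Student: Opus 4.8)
The plan is to verify the criterion from Remark \ref{remarkdaugasym}: it suffices to show that, given $x_1,\dots,x_n\in S_{\mathcal C(K)}$, $x'\in B_{\mathcal C(K)}$ and $\varepsilon>0$, the set $\{x^N:x\in A\}$ is norming for $\mathcal P(^N\mathcal C(K))$, where $A$ is the set of those $x\in B_{\mathcal C(K)}$ admitting an operator $\varphi:\mathcal C(K)\to\mathcal C(K)$ with $\Vert\varphi\Vert\le 1+\varepsilon$, $\Vert\varphi(x_i)-x_i\Vert<\varepsilon$ and $\varphi(x)=x'$. The key point is that $\mathcal C(K)$ has the Dunford--Pettis property, so every polynomial on $\mathcal C(K)$ is weakly sequentially continuous, and hence a norm-one polynomial $P\in\mathcal P(^N\mathcal C(K))$ nearly attains its norm along sequences in $S_{\mathcal C(K)}$ converging weakly; more usefully, the norm of $P$ can be computed (up to $\varepsilon$) testing against functions supported on an arbitrarily small open set. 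Concretely, since $K$ has no isolated points, given a norm-one $P$ and a small open set $U\subseteq K$ we can find $v\in S_{\mathcal C(K)}$ with $\operatorname{supp}(v)\subseteq U$ and $\vert P(v)\vert>1-\varepsilon$; this is exactly the kind of localization used to show $\mathcal C(K)$ has the Daugavet property, now upgraded to the polynomial setting via the Dunford--Pettis property.

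First I would fix a norm-one $N$-homogeneous polynomial $P$ and produce, using that $\mathcal C(K)$ has the Daugavet property (equivalently the ODP by the remarks after Theorem \ref{theo:maintheorem}, since $\mathcal C(K)$ is an $L_1$-predual with the DP) together with the Dunford--Pettis-based localization, a function $x\in S_{\mathcal C(K)}$ which lies in a prescribed slice and which is ``almost disjoint'' from each $x_i$ in the sense that it is concentrated on a small open set $U$ on which all the $x_i$ are nearly constant, and with $\vert P(\tilde P\text{-value})\vert$ large. Second, on this small open set one constructs the extension operator $\varphi$ explicitly: decompose $\mathcal C(K)\cong \mathcal C(K\setminus U)\oplus$ (something supported near $U$) in the spirit of the proof of Proposition \ref{prop:L1odp}, letting $\varphi$ be (close to) the identity off $U$ and sending $x$ to the prescribed $x'$ by composing with a norm-one map that rescales the ``bump'' $x$ to $x'$; the disjointness of the supports keeps $\Vert\varphi\Vert\le 1+\varepsilon$ and $\Vert\varphi(x_i)-x_i\Vert<\varepsilon$. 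This places $x$ in the set $A$. Third, I would run the estimate of Theorem \ref{theo:symmocta} verbatim with $Q=P\circ\varphi$: since $\Vert\varphi\Vert\le 1+\varepsilon$ one gets $\Vert Q\Vert\le(1+\varepsilon)^N$, $Q(z)$ is close to $P(z)$ for the fixed $z\in S_{\widehat{\otimes}_{\pi,s,N}\mathcal C(K)}$, and $Q(x)=P(\varphi(x))$ is controlled; hence $\vert P(x^N)\vert=\vert Q$-type quantity$\vert$ shows $\{x^N:x\in A\}$ norms $P$ up to $\varepsilon$. Then Remark \ref{remarkdaugasym} gives the Daugavet property of $\widehat{\otimes}_{\pi,s,N}\mathcal C(K)$.

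The main obstacle is the second-to-last paragraph's first step: proving that one can localize the norm of an $N$-homogeneous polynomial on $\mathcal C(K)$ to an arbitrarily small open set while simultaneously landing inside a prescribed slice of the symmetric tensor product and keeping the $x_i$ controlled. This is where the Dunford--Pettis property does the real work — one needs that if $v_k\to 0$ weakly then $P(w+v_k)\to P(w)$, so that ``pushing mass into a small open set'' does not destroy the value of $P$, and one needs to combine this with the classical slice/Daugavet machinery for $\mathcal C(K)$ without isolated points. The rest — building $\varphi$ and the final polynomial norm estimate — is a routine adaptation of the arguments already carried out in Proposition \ref{prop:L1odp} and Theorem \ref{theo:symmocta}.
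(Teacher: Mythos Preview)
Your overall strategy matches the paper's: both invoke Remark \ref{remarkdaugasym} and identify the Dunford--Pettis property of $\mathcal C(K)$ (hence weak sequential continuity of polynomials) as the crucial ingredient. However, there is a genuine gap in the way you try to execute the first step.

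You claim that ``given a norm-one $P$ and a small open set $U\subseteq K$ we can find $v\in S_{\mathcal C(K)}$ with $\operatorname{supp}(v)\subseteq U$ and $\vert P(v)\vert>1-\varepsilon$''. This is false: for $P(f)=f(t_0)^N$ and any open $U$ with $t_0\notin U$, every $v$ supported in $U$ satisfies $P(v)=0$. More generally, you cannot simultaneously force $x$ to be supported in a small set where the $x_i$ are nearly constant \emph{and} to nearly maximise $P$. Your later remark that ``if $v_k\to 0$ weakly then $P(w+v_k)\to P(w)$'' is the correct use of Dunford--Pettis, but it points to \emph{modifying} a near-maximiser on a small set, not to \emph{supporting} it there; these are not the same, and only the former works.

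The paper cleans this up in two ways. First, it replaces your set $A$ by the (a priori smaller) set of $g\in S_{\mathcal C(K)}$ that are $\ell_1$-orthogonal to the finite-dimensional subspace $E$, i.e.\ $\Vert f+\lambda g\Vert>(1-\varepsilon)(\Vert f\Vert+\vert\lambda\vert)$ for all $f\in E$, $\lambda\in\mathbb R$; for any such $g$ the operator $\varphi$ comes for free from Lindenstrauss' extension theorem for $L_1$-preduals, exactly as in the proof of Theorem \ref{theo:maintheorem}, so no explicit ``disjoint support'' construction of $\varphi$ is needed (and indeed your sketch of $\varphi$ does not obviously give a bounded operator sending $x$ to an arbitrary $x'\in B_{\mathcal C(K)}$). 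Second, to show $\{g^N:g\in A\}$ norms $\mathcal P(^N\mathcal C(K))$, the paper proves a separate lemma (Lemma \ref{lemac(K)}): given a near-maximiser $y\in S_{\mathcal C(K)}$ of $P$, one builds a sequence $g_n\in A$ with $g_n\to y$ weakly by leaving $g_n$ equal to $y$ off a sequence of pairwise disjoint small open sets and adjusting it on those sets to witness $\ell_1$-orthogonality to a $\delta$-net of $S_E$. Then Dunford--Pettis gives $P(g_n)\to P(y)>1-\varepsilon$. This is precisely the ``modification'' picture rather than the ``small support'' picture, and it is what makes the argument go through.
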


In order to prove the Proposition we need the following lemma.

\begin{lemma}\label{lemac(K)}
Let $K$ be a compact Hausdorff space without isolated points. Let $E$ be a finite-dimensional subspace of $\mathcal C(K)$ and $\varepsilon>0$. Then the set 
$$A:=\{g\in S_{\mathcal C(K)}:\Vert f+\lambda g\Vert>(1-\varepsilon)(\Vert f\Vert+\vert\lambda\vert)  \forall f\in E,\lambda\in\mathbb R\}$$
is weakly sequentially dense in $B_{\mathcal C(K)}$.
\end{lemma}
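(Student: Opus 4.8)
The plan is to produce, given a finite-dimensional $E\subseteq\mathcal C(K)$, an $\varepsilon>0$, an $h\in B_{\mathcal C(K)}$, and a weak neighbourhood of $h$, a function $g$ in $A$ lying in that neighbourhood; since the weak topology on bounded subsets of $\mathcal C(K)$ is metrizable when $K$ is metrizable and in general one works with finitely many functionals, it suffices to approximate $h$ against finitely many measures $\mu_1,\dots,\mu_m\in\mathcal C(K)^*$ while keeping $g$ in the octahedrality set $A$. The key point is that octahedrality of $\mathcal C(K)$ (which holds because $\mathcal C(K)$ has the Daugavet property, via \cite[Lemma 2.8]{kssw}) is witnessed by functions that are small on a prescribed ``large'' portion of $K$: concretely, for a finite-dimensional $E$ one can find a single point $t_0\in K$ (or a small open set $U$) such that any $g\in S_{\mathcal C(K)}$ with $g(t_0)=1$ and $\Vert g\Vert_{K\setminus U}$ small enough already satisfies $\Vert f+\lambda g\Vert>(1-\varepsilon)(\Vert f\Vert+|\lambda|)$ for all $f\in E$, $\lambda\in\mathbb R$. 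This is the standard mechanism behind octahedrality of $\mathcal C(K)$ for perfect $K$, and I would first isolate it as the technical core.

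Next I would use that $K$ has no isolated points together with regularity of the measures $\mu_1,\dots,\mu_m$: choose a point $t_0$ where the prescribed octahedrality estimate works, and since $\{t_0\}$ has measure zero for each $\mu_k$ (as $K$ is perfect there is no mass forced onto a single point — more precisely, one first discards the at most countably many atoms of the $\mu_k$, which cannot exhaust any open set, and picks $t_0$ avoiding them and lying in the interior-relevant set), pick a small open neighbourhood $U\ni t_0$ with $|\mu_k|(U)$ as small as desired for all $k$. Using Urysohn's lemma, build $g$ to agree with $h$ (approximately) outside $U$, to spike to value $1$ at $t_0$, and to stay within $B_{\mathcal C(K)}$, interpolating continuously on $U$; normalizing so that $\Vert g\Vert=1$ costs nothing since $\Vert h\Vert\le 1$ and we force $g(t_0)=1$. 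Then $\int g\,d\mu_k$ differs from $\int h\,d\mu_k$ by at most $2\,|\mu_k|(U)$, which is $<\delta$, placing $g$ in the given weak neighbourhood; and by the first step $g\in A$ provided $U$ is chosen small enough for the octahedrality estimate as well. Taking $U$ small enough to satisfy both constraints simultaneously finishes the construction.

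Finally, to get \emph{sequential} density rather than just density, I would run this argument with a shrinking sequence of weak neighbourhoods: fix $h\in B_{\mathcal C(K)}$, enumerate a countable base of weak neighbourhoods determined by finitely many measures at a time (or, in the non-metrizable case, interpret ``weakly sequentially dense'' as producing for each $h$ a sequence $g_j\in A$ with $g_j\to h$ weakly, which one builds by a diagonal choice over an increasing family of finite sets of functionals and $\varepsilon_j\downarrow 0$), obtaining $g_j\in A$ with $\int g_j\,d\mu\to\int h\,d\mu$ for every $\mu$ in the prescribed countable family; a routine boundedness-plus-density argument upgrades this to $g_j\rightharpoonup h$ in $\mathcal C(K)^*$-weak topology against all of $\mathcal C(K)^*$. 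The main obstacle is the first step: verifying that octahedrality of $\mathcal C(K)$ can be witnessed \emph{locally}, i.e. by functions supported (up to small error) near a single freely chosen point avoiding the atoms of finitely many measures — this requires unwinding the proof of \cite[Lemma 2.8]{kssw} in the concrete $\mathcal C(K)$ setting and checking the estimate $\Vert f+\lambda g\Vert\ge |\lambda|\,|g(t_0)| - \Vert f\Vert_{\{|g|\text{ small}\}}\cdot(\text{stuff})$ is robust enough, but this is exactly the classical argument that $\mathcal C(K)$ is octahedral for perfect $K$ and should go through with only bookkeeping.
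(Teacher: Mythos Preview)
Your outline has two genuine gaps.

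First, the ``single point $t_0$'' mechanism you propose for witnessing membership in $A$ does not work. The condition $\Vert f+\lambda g\Vert>(1-\varepsilon)(\Vert f\Vert+|\lambda|)$ must hold for \emph{every} $f\in E$, and it cannot be secured by evaluating at one point: for instance, with $K=[0,1]$ and $E=\operatorname{span}\{t,\,1-t\}$ there is no $t_0$ at which both generators are close to their maxima. More fundamentally, your sketch contains an internal contradiction: you first require $\Vert g\Vert_{K\setminus U}$ to be small in order to get the octahedrality estimate, and then require $g=h$ on $K\setminus U$ in order to get weak closeness to $h$ --- but $h$ is an arbitrary element of $B_{\mathcal C(K)}$, so these two requirements are incompatible. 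With $g=h$ outside $U$ and $g(t_0)=1$, the only bounds one can extract are $\Vert f+\lambda g\Vert\geq \max\bigl(\Vert f\Vert-|\lambda|,\,|\lambda|-|f(t_0)|\bigr)$, which at $\Vert f\Vert=|\lambda|$ gives roughly $\Vert f\Vert$ rather than the needed $2(1-\varepsilon)\Vert f\Vert$. The paper's proof avoids this by taking a $\delta$-net $f_1,\ldots,f_k$ of $S_E$ and, for each $n$, choosing $k$ points $t_n^1,\ldots,t_n^k$ with $|f_i(t_n^i)|>1-\delta$, then forcing $g_n(t_n^i)$ to match the sign of $f_i(t_n^i)$; this yields $\Vert f_i+g_n\Vert>2-2\delta$ for every $i$, and the reduction in the proof of \cite[Lemma~2.8]{kssw} upgrades this to the full estimate for all $f\in E$ and $\lambda\in\mathbb R$.

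Second, your route to \emph{sequential} weak density via a countable base of weak neighbourhoods, or a diagonal argument over increasing finite families of measures, does not go through when $K$ is not metrizable: $\mathcal C(K)^*$ need not be separable, and bounded convergence against a countable family of functionals does not imply weak convergence. The paper sidesteps this by choosing the modification neighbourhoods $V_n^i$ to have pairwise disjoint closures as $n$ varies; then each $t\in K$ lies in at most one $V_n^i$, so $g_n(t)=h(t)$ for all but at most one $n$, giving $g_n\to h$ pointwise. Bounded pointwise convergence in $\mathcal C(K)$ implies weak convergence by the dominated convergence theorem applied to each measure in $\mathcal C(K)^*$, and this needs no countability hypothesis on $K$.
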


\begin{proof} Write $X=\mathcal C(K)$ for shorten. Let $0<\delta<\frac{\varepsilon}{4}$. Pick $f_1,\ldots, f_k$ to be a $\delta$-net in $S_E$ and $h\in S_X$. Let us find a sequence $\{g_n\}\in S_X$ such that $\{g_n\}\rightarrow h$ in the weak topology of $B_X$ and that $\Vert f_i+g_n\Vert>2-2\delta$ holds for every $i\in\{1,\ldots, k\}$ and every $n\in\mathbb N$. This is enough in view of the proof of \cite[Lemma 2.8]{kssw}. In order to do so consider, for every $i\in\{1,\ldots, k\}$, the set
$$A_i:=\{t\in K: \vert f_i(t)\vert>1-\delta\}.$$
Note that every $A_i$ is an (infinite) open subset of $K$. Now, making an inductive argument we can find, for every $i\in\{1,\ldots, k\}$, a sequence of non-empty open sets $V_n^i$ such that $\overline{V}_n^i\subseteq A_i$ for every $i\in\{1,\ldots, k\}$ and such that
$$\overline{V}_n^i\cap \overline{V}_m^j=\emptyset$$
holds for every $n,m\in\mathbb N$ and $i,j\in\{1,\ldots, k\}$ such that either $n\neq m$ or $i\neq j$. Now select, for every $i\in\{1,\ldots, k\}$ and every $n\in\mathbb N$ a point $t_n^i\in V_n^i$. Making use of Urysohn lemma we can construct, for every $n\in\mathbb N$, a function $g_n\in S_X$ such that $g_n(t_n^i)=\operatorname{sign}(f_i(t_n^i))f_i(t_n^i)$ and $g_n=h$ on $K\setminus\bigcup\limits_{i=1}^k V_n^i$. It is clear that the (bounded) sequence $\{g_n\}$ converges pointwise to $h$, so $\{g_n\}$ converges weakly to $h$ . Furthermore,
$$\Vert f_i+g_n\Vert\geq \vert f_i(t_n^i)+g(t_n^i)\vert=2\vert f_i(t_n^i)\vert>2-2\delta.$$
So the lemma is proved.
\end{proof}

\begin{proof}[Proof of Proposition \ref{propo:tensosime}]
Let $X=\mathcal C(K)$. According to Remark \ref{remarkdaugasym} we will prove that, given a finite-dimensional subspace $E$ of $X$, $x'\in B_X$ and a positive $\varepsilon>0$, if we define
$$A:=\{g\in S_X:\Vert f+\lambda g\Vert>(1-\varepsilon)(\Vert f\Vert+\vert\lambda\vert)  \forall f\in E,\lambda\in\mathbb R\},$$
then $\{g^N:g\in A\}$ is norming for $\mathcal P(^N X)$. Note that if the assertion were proved then, given $x\in A$, we could construct, by a similar argument to that of the proof of Theorem \ref{theo:maintheorem}, an operator $\varphi:X\longrightarrow X$ such that $\varphi(e)=e$ for every $e\in E$, $\varphi(x)=x'$ and $\Vert \varphi\Vert\leq 1+\varepsilon$ since $X$ is an $L_1$- predual. So the Proposition would follow by an application of Remark \ref{remarkdaugasym}.

Hence, in order to prove that $\{g^N:g\in A\}$ is norming for $\mathcal P(^N X)$, pick a norm-one polynomial $P\in \mathcal P(^N X)$, a positive $\varepsilon>0$ and a point $y\in S_Y$ such that $P(y)>1-\varepsilon$. By Lemma \ref{lemac(K)} we get that $A$ is weakly sequentially dense in $B_X$, so we can find a sequence of points in $A$, say $\{g_n\}$, such that $\{g_n\}\rightarrow y$ in the weak topology of $B_X$. Now, since $X$ has the Dunford-Pettis property \cite[Theorem 5.4.5]{alka} then $P(g_n)\rightarrow P(y)>1-\varepsilon$ \cite[Corollary 5.1]{gjl}, so we can find $n\in\mathbb N$ such that $P(g_n)>1-\varepsilon$. Since $g_n\in A$, the arbitrariness of $P$ and $\varepsilon$ proves the fact that $\{g^N: g\in A\}$ is norming for $\mathcal P(^N X)$, and the proposition is proved.
\end{proof}

\begin{remark}
To the best of our knowledge, Proposition \ref{propo:tensosime} provides the first non-trivial example of a projective symmetric tensor product enjoying the Daugavet property.
\end{remark}

\begin{remark}
Note that Lemma \ref{lemac(K)} does not hold for general Banach spaces with the Daugavet property. Indeed, in \cite[Theorem 2.5]{kw} an example of a Banach space with the Daugavet property and the Schur property is exhibited.
\end{remark}

\subsection{2-roughness in projective tensor product}

Let $X$ be a Banach space. Recall that the norm of $X$ is said to be \textit{$\varepsilon$- rough} if 
$$\limsup\limits_{\Vert h\Vert\rightarrow 0} \frac{\Vert x+h\Vert+\Vert x-h\Vert-2\Vert x\Vert}{\Vert h\Vert}\geq \varepsilon$$
holds for every $x\in X$.

Note that rough norms are ``uniformly non-Fr\'echet differentiable". See \cite[Chapter 1]{dgz} for background on rough norms.

Our aim will be to prove the following result.

\begin{proposition}\label{propo2-ruda}
Let $X$ be a Banach space with the ODP. Then the norm of $X\pten Y$ is $2$-rough for every non-zero Banach space $Y$.
\end{proposition}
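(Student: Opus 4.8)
The plan is to unravel the definition of $2$-roughness directly and feed it with the ODP. Fix a norm-one tensor $z\in S_{X\pten Y}$ (the inequality is scale-invariant, so it suffices to treat $z\in S_{X\pten Y}$, and the case $z=0$ is trivial). I must show that for every $\varepsilon>0$ there are arbitrarily small $h\in X\pten Y$ with $\Vert z+h\Vert+\Vert z-h\Vert-2>(2-\varepsilon)\Vert h\Vert$. The natural candidate is $h=t\,(x\otimes y_0)$ for a fixed $y_0\in S_Y$, a small scalar $t>0$, and a well-chosen $x\in S_X$ depending on $z$, $\varepsilon$ and $t$; then $\Vert h\Vert=t$, and I want $\Vert z+t\,x\otimes y_0\Vert+\Vert z-t\,x\otimes y_0\Vert>2+(2-\varepsilon)t$.

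The key step is to produce, for a prescribed $\eta>0$, an element $x\in S_X$ with two simultaneous good properties: first, a norm-one bilinear form $G\in (X\pten Y)^*=B(X,Y)$ with $G(z)>1-\eta$ and $G(x,y_0)>1-\eta$; second, another norm-one bilinear form $G'$ with $G'(z)>1-\eta$ and $G'(x,y_0)<-1+\eta$. Then
\[
\Vert z+t\,x\otimes y_0\Vert\geq G(z)+t\,G(x,y_0)>1-\eta+t(1-\eta),
\]
\[
\Vert z-t\,x\otimes y_0\Vert\geq G'(z)-t\,G'(x,y_0)>1-\eta+t(1-\eta),
\]
so the sum exceeds $2-2\eta+2t(1-\eta)$, which beats $2+(2-\varepsilon)t$ once $\eta$ is small relative to $\varepsilon t$; since $t$ can be taken as small as we like and $\eta$ is chosen afterward, this yields the $\limsup$ bound. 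So everything reduces to constructing such an $x$, $G$, $G'$.

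This is exactly where the ODP enters, in the spirit of the proof of Theorem~\ref{theo:maintheorem} and Remark~\ref{remaodpimplidp}. Pick a norm-one bilinear form $G$ on $X\times Y$ with $G(z)>1-\eta$ (possible since $\Vert z\Vert=1$), and pick $x_0\in S_X$, $y_0\in S_Y$ with $G(x_0,y_0)>1-\eta$. Approximate $z$ within $\eta$ by a finite sum $\sum_{i=1}^m u_i\otimes v_i$ and let $E=\operatorname{span}\{u_1,\dots,u_m,x_0\}\subseteq X$, choosing a basis of unit vectors $x_1,\dots,x_k\in S_X$ of $E$. The ODP (applied with the slice being all of $B_X$, or a trivial slice) gives a single $x\in S_X$ such that for each target $x'\in B_X$ there is an operator with norm $\le 1+\eta$ sending $x$ to $x'$ and nearly fixing $x_1,\dots,x_k$; applying this once with $x'=x_0$ and once with $x'=-x_0$ produces operators $\varphi_+,\varphi_-:X\to X$ with $\Vert\varphi_\pm\Vert\le 1+\eta$, $\varphi_\pm(x)=\pm x_0$, and $\Vert\varphi_\pm(x_i)-x_i\Vert<\eta$. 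Set $G(u,v):=\dfrac{1}{1+\eta}\,G_0(\varphi_+(u),v)$ and $G'(u,v):=\dfrac{1}{1+\eta}\,G_0(\varphi_-(u),v)$ where $G_0$ is the original bilinear form; these have norm $\le 1$, and the estimate $G(z)\approx G_0(z)>1-\eta$ follows because $\varphi_+$ nearly fixes each $u_i$ and $z$ is $\eta$-close to $\sum u_i\otimes v_i$, while $G(x,y_0)=\frac{1}{1+\eta}G_0(x_0,y_0)>\frac{1-\eta}{1+\eta}$ and likewise $G'(x,y_0)=\frac{1}{1+\eta}G_0(-x_0,y_0)<-\frac{1-\eta}{1+\eta}$. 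Renaming $\eta$ and absorbing the $\frac{1}{1+\eta}$ factors into the choice of $\eta$ vs.\ $\varepsilon t$, this is precisely the pair of bilinear forms needed above.

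The main obstacle is the bookkeeping of the two competing small parameters: $t$ (the size of $h$, which must be allowed to tend to $0$) and $\eta$ (the ODP tolerance, chosen after $t$). One must be careful that the ``$2$'' in $2$-roughness is not eroded — i.e.\ that the loss from $\Vert\varphi_\pm\Vert\le 1+\eta$ and from the $\eta$-approximation of $z$ only costs an additive $O(\eta)$ and a multiplicative $(1+\eta)^{-1}$, both of which are negligible against $(2-\varepsilon)t$ when $\eta$ is taken small relative to $\varepsilon t$. Once the inequalities are arranged in the order ``fix $x\in X$, $\varepsilon>0$; let $t\to 0$; for each $t$ choose $\eta=\eta(t,\varepsilon)$ small'', the $\limsup$ is $\ge 2-\varepsilon$ for every $\varepsilon>0$, hence the norm of $X\pten Y$ is $2$-rough.
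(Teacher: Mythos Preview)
Your argument is correct and its core---choose a norm-one bilinear form $G_0$ with $G_0(z)>1-\eta$ and a witnessing pair $(x_0,y_0)$, use the ODP to produce a single $x$ together with operators $\varphi_\pm$ sending $x$ to $\pm x_0$ while nearly fixing the finite data from an approximation of $z$, then twist $G_0$ through $\varphi_\pm$---is exactly the paper's proof. The paper avoids your two-parameter bookkeeping by invoking the equivalence of $2$-roughness with local octahedrality (Lemma~\ref{lema:cara2-ruda}(3)): it suffices to show $\Vert z\pm x\otimes y_0\Vert>2-\varepsilon$, which is precisely your estimate at $t=1$, so the auxiliary limit in $t$ and the delicate ordering $\eta=\eta(t,\varepsilon)$ become unnecessary.
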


In order to prove the proposition we need the following reformulation of 2-roughness, which will be useful in the sequel. The proof (1)$\Longleftrightarrow$(2) is \cite[Proposition I.1.11]{dgz} whereas that of (1)$\Longleftrightarrow$(3) is \cite[Lemma 3.1]{hlp}.

\begin{lemma}\label{lema:cara2-ruda}
Let $X$ be a Banach space. The following assertions are equivalent:
\begin{enumerate}
    \item The norm of $X$ is $2$-rough.
    \item Every weak-star slice of $B_{X^*}$ has diameter two.
    \item $X$ is \textit{locally octahedral} (LOH), that is, for every $x\in S_X$ and every $\varepsilon>0$ there exists $y\in S_X$ such that $\Vert x\pm y\Vert>2-\varepsilon$.
\end{enumerate}
\end{lemma}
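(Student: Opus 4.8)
The plan is to prove the two equivalences (1)$\Leftrightarrow$(3) and (2)$\Leftrightarrow$(3) directly from supporting functionals, rather than treating the cited results as black boxes. First I would reduce roughness to the unit sphere: the difference quotient $(\|x+h\|+\|x-h\|-2\|x\|)/\|h\|$ is invariant under the joint substitution $x\mapsto\lambda x$, $h\mapsto\lambda h$, and is identically $2$ at $x=0$, so condition (1) holds if and only if for every $x\in S_X$ and every $\delta>0$ there are arbitrarily small $h$ with $\|x+h\|+\|x-h\|-2>(2-\delta)\|h\|$.

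For (3)$\Rightarrow$(1), I fix $x\in S_X$ and apply LOH to obtain $y\in S_X$ with $\|x\pm y\|>2-\varepsilon$; choosing norming functionals $f,g\in S_{X^*}$ with $f(x+y)=\|x+y\|$ and $g(x-y)=\|x-y\|$ forces $f(x),f(y),g(x),-g(y)$ all to exceed $1-\varepsilon$. For small $t>0$ the estimates $\|x+ty\|\ge f(x)+tf(y)$ and $\|x-ty\|\ge g(x)-tg(y)$, both bounded below by $(1-\varepsilon)(1+t)$, give $\|x+ty\|+\|x-ty\|-2>2t-2\varepsilon-2\varepsilon t$. Dividing by $\|ty\|=t$ leaves an additive error $2\varepsilon/t$, so the one delicate point here is the choice of scale: taking $t=\sqrt{\varepsilon}$ sends both $t\to0$ and the quotient $\to2$ as $\varepsilon\to0$, which yields (1).

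Running this in reverse gives (1)$\Rightarrow$(3), and this is where I expect the main obstacle. From a small $h$ with $\|x+h\|+\|x-h\|>2+(2-\delta)\|h\|$, the trivial bounds $\|x\pm h\|\le1+\|h\|$ force $\|x\pm h\|>1+(1-\delta)\|h\|$. Setting $y=h/\|h\|\in S_X$, I extract $f\in S_{X^*}$ supporting $x+h$ and $g\in S_{X^*}$ supporting $x-h$; the crux is that each functional must do double duty. On one hand $f(h)>(1-\delta)\|h\|$ and $-g(h)>(1-\delta)\|h\|$ amplify the small direction $h$ to the unit vector, giving $f(y)>1-\delta$ and $g(y)<-(1-\delta)$; on the other hand $f(x),g(x)>1-\delta\|h\|$ precisely because $\|h\|$ is small. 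Combining, $\|x+y\|\ge f(x)+f(y)$ and $\|x-y\|\ge g(x)-g(y)$ both exceed $2-2\delta$, so $\delta=\varepsilon/2$ witnesses LOH with the same $y$ serving both signs.

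Finally (2)$\Leftrightarrow$(3) is a routine duality. Writing a weak-star slice of $B_{X^*}$ as $\{f\in B_{X^*}:f(x)>1-\alpha\}$ with $x\in S_X$, LOH produces $y\in S_X$ with $\|x\pm y\|>2-\varepsilon$; the supporting functionals of $x+y$ and $x-y$ then both lie in the slice yet differ by more than $2-2\varepsilon$ when tested against $y$, so the slice has diameter two. Conversely, if every such slice has diameter two, then inside $\{f\in B_{X^*}:f(x)>1-\varepsilon\}$ I can pick $f,g$ with $\|f-g\|>2-\delta$, obtain $z\in S_X$ with $f(z)>1-\delta$ and $g(z)<-(1-\delta)$, and conclude that $\|x+z\|\ge f(x+z)$ and $\|x-z\|\ge g(x-z)$ are both close to $2$; thus $y=z$ witnesses LOH. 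This closes the cycle of equivalences.
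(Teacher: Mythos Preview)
Your argument is correct. The paper does not actually prove this lemma: it records the equivalences as a reformulation and defers (1)$\Leftrightarrow$(2) to \cite[Proposition I.1.11]{dgz} and (1)$\Leftrightarrow$(3) to \cite[Lemma 3.1]{hlp}, treating both as black boxes. You instead unpack the content of those citations into a self-contained duality argument via supporting functionals. The gain is that your proof makes explicit the mechanism---norming functionals for $x\pm y$ automatically almost norm both $x$ and $\pm y$ separately---which is exactly what the paper later exploits in Proposition~\ref{propo2-ruda}, so your version integrates better with the surrounding section. The cost is only a slightly more careful bookkeeping of parameters: in (1)$\Rightarrow$(3) you tacitly use $\Vert h\Vert<1$ to pass from $2-\delta(1+\Vert h\Vert)$ to $2-2\delta$, and in (2)$\Rightarrow$(3) the slice width and the diameter defect should both be taken as $\varepsilon/2$ to land on $2-\varepsilon$; both are harmless once stated.
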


\begin{proof}[Proof of Proposition \ref{propo2-ruda}]
In order to prove that the norm of $X\pten Y$ is $2$-rough, we will make use of Lemma \ref{lema:cara2-ruda} (3). Pick an element $z\in S_{X\pten Y}$ and $\varepsilon>0$, and let us find an element $x\otimes y\in S_{X\pten Y}$ such that $\Vert z\pm x\otimes y\Vert>2-\varepsilon$.

Choose $\eta>0$ small enough so that $\frac{2-\eta(4+\eta)}{1+\eta}>2-\varepsilon$. Pick a norm-one bilinear form $G$ such that $G(z)>1-\eta$ and $x_0\in S_X,y_0\in S_Y$ such that $G(x_0,y_0)>1-\eta$.

From the definition of projective norm consider $n\in\mathbb N, x_1,\ldots, x_{ n}\in S_X$, $y_1,\ldots, y_{ n}\in S_Y$ and $\lambda_1,\ldots, \lambda_n\in [0,1]$ such that $\sum_{i=1}^n \lambda_i=1$ and 
\begin{equation*}
\left\Vert z-\sum_{i=1}^{ n} \lambda_i x_i\otimes y_i\right\Vert<\eta.
\end{equation*}
Since $X$ has the ODP we can find $x\in S_X$ and  $\psi^\pm:X\longrightarrow X$ such that $\Vert \psi^\pm(x_i)-x_i\Vert<\eta, \psi^\pm(x)=\pm x_0$ and 
$ \Vert \psi^\pm\Vert\leq 1+\eta$.

Now, define the bilinear forms $T^\pm(u,v):=G(\psi^\pm(u),v)$ for $u\in X, v\in Y$. We claim that $\Vert T^\pm \Vert\leq 1+\eta$. Indeed, given $u\in B_X$ and $y\in B_Y$ we have
$$
\vert G(\psi^\pm(u),v)\vert\leq \Vert G\Vert\Vert \psi^\pm(u)\Vert \Vert v\Vert\leq 1+\eta.
$$
Furthermore,
\[\begin{split}
T^\pm(z) & \geq \sum_{i=1}^{ n} \lambda_i G(\psi^\pm(x_i),y_i)-\eta\Vert T^\pm\Vert \\
& \geq \sum_{i=1}^{ n} \lambda_i G(x_i)(y_i)-\sum_{i=1}^n \lambda_i \Vert G\Vert \Vert \psi^\pm (x_i)-x_i\Vert\Vert y_i\Vert-\eta(1+\eta)\\
& >1-\eta(3+\eta).
\end{split}
\]
Also
$$T^\pm(\pm x,y)=G(x_0,y_0)>1-\eta.$$
Finally
\[
\begin{split}
\Vert z\pm x\otimes y\Vert&  \geq \frac{T^\pm\left(\pm x\otimes y+z\right)}{\Vert T^\pm\Vert}\\
& \geq \frac{1-\eta+T^{\pm}(z)}{1+\eta}\\
& \geq \frac{2-\eta(4+\eta)}{1+\eta}\\
&  >2-\varepsilon.
\end{split}
\]
Since $\varepsilon>0$ was arbitrary then $X\pten Y$ is LOH, and we are done.
\end{proof}

\begin{remark}
An examination of the previous proof yields that, in Proposition \ref{propo2-ruda}, if we localise $z$ in a given slice then we can get the \textit{local diametral diameter two property} (see \cite{blr2} for definition and backgroud). We do thank Johann Langemets for pointing out to the authors this improvement.
\end{remark}

In order to obtain some consequences from Proposition \ref{propo2-ruda}, let us introduce a bit of notation. Given a Banach space $X$, recall that $X$ is said to have the \textit{slice diameter two property (slice-D2P)} if every slice of the unit ball of $B_X$ has diameter two. If $X$ is itself a dual Banach space, then $X$ is said to have the \textit{weak-star slice diameter two property ($w^*$-slice-D2P)} if every weak-star slice of $B_X$ has diameter two. See \cite{aln, blr, llr} and references therein for background on diameter two properties.

Taking into account the duality $L(X,Y^*)=(X\pten Y)^*$ and Lemma \ref{lema:cara2-ruda} we get the following result.

\begin{corollary}
Let $X$ be a Banach space with the ODP. Then $L(X,Y^*)$ has the $w^*$-slice-D2P.
\end{corollary}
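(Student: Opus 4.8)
The plan is to derive this immediately from Proposition \ref{propo2-ruda} together with Lemma \ref{lema:cara2-ruda}. Recall that if $X$ has the ODP, then by Proposition \ref{propo2-ruda} the norm of $X\pten Y$ is $2$-rough for every non-zero Banach space $Y$; since $Y^*$ is always a non-zero Banach space (we apply the result with $Y^*$ in place of the free variable), the norm of $X\pten Y^*$ is $2$-rough. By the equivalence (1)$\Longleftrightarrow$(2) in Lemma \ref{lema:cara2-ruda}, a Banach space has $2$-rough norm precisely when every weak-star slice of its dual ball has diameter two. Applying this to $X\pten Y^*$, we obtain that every weak-star slice of $B_{(X\pten Y^*)^*}$ has diameter two.

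The only remaining point is the identification $(X\pten Y^*)^* = L(X,Y^{**})$, and more to the point that this space contains $L(X,Y^*)$ in a way that transfers the diameter two property. Actually, the cleanest route is to use the duality $(X\pten Y)^* = L(X,Y^*)$ directly: taking $Y$ itself to be non-zero, Proposition \ref{propo2-ruda} gives that $X\pten Y$ has $2$-rough norm, hence by Lemma \ref{lema:cara2-ruda}(1)$\Leftrightarrow$(2) every weak-star slice of $B_{(X\pten Y)^*}$ has diameter two, and $(X\pten Y)^* = L(X,Y^*)$ as recalled in the Terminology section. Thus $L(X,Y^*)$, as a dual space, has the property that every weak-star slice of its unit ball has diameter two, which is exactly the $w^*$-slice-D2P. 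Since $Y$ was an arbitrary non-zero Banach space, the statement follows for all $Y$ with $Y^*\neq\{0\}$, i.e. all non-zero $Y$; and for $Y=\{0\}$ the statement is vacuous.

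There is essentially no obstacle here — the corollary is a one-line consequence of the two cited results, with the duality formula $(X\pten Y)^*=L(X,Y^*)$ doing the bookkeeping. The only thing to be mildly careful about is to phrase it so that the weak-star slices in Lemma \ref{lema:cara2-ruda}(2) are understood with respect to the predual $X\pten Y$, which is indeed the canonical predual making $L(X,Y^*)$ a dual space, so the notion of $w^*$-slice-D2P in the statement matches the one supplied by the lemma. Hence the proof reads: \emph{By Proposition \ref{propo2-ruda}, the norm of $X\pten Y$ is $2$-rough. By Lemma \ref{lema:cara2-ruda}, every weak-star slice of $B_{(X\pten Y)^*}$ has diameter two. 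Since $(X\pten Y)^*=L(X,Y^*)$, we conclude that $L(X,Y^*)$ has the $w^*$-slice-D2P.}
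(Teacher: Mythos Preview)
Your proof is correct and follows exactly the paper's route: apply Proposition~\ref{propo2-ruda} to get that $X\pten Y$ has $2$-rough norm, invoke the equivalence (1)$\Leftrightarrow$(2) of Lemma~\ref{lema:cara2-ruda}, and use the duality $(X\pten Y)^*=L(X,Y^*)$. The initial detour through $X\pten Y^*$ and $L(X,Y^{**})$ is unnecessary, as you yourself note; the final three-line summary is precisely what the paper intends.
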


Let us end with an application of Proposition \ref{propo2-ruda} to the study of the slice diameter two property in injective tensor products. 

\begin{corollary}\label{cor:sliced2pinyec}
Let $(\Omega,\Sigma,\mu)$ be a measure space and assume that $\mu$ does not contain any atom. Let $X$ be a Banach space such that $X^*$ has the RNP. Then $L_1(\mu)\iten X$ has the slice-D2P.
\end{corollary}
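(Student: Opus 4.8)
The plan is to deduce Corollary~\ref{cor:sliced2pinyec} from Proposition~\ref{propo2-ruda} via duality, together with the dictionary between roughness and diameter two properties recorded in Lemma~\ref{lema:cara2-ruda}. First I would identify the relevant dual relation: for Banach spaces $X$ and $Y$ one has $(X\pten Y)^*=L(X,Y^*)$, and in the particular case $X=L_1(\mu)$ one has the classical isometric identification $L(L_1(\mu),X^*)=L_\infty(\mu,X^*)$ when $X^*$ has the Radon--Nikod\'ym property, so that
\[
(L_1(\mu)\pten X)^*=L_\infty(\mu,X^*)=\big(L_1(\mu)\iten X\big)^{**}\supseteq L_1(\mu)\iten X,
\]
the last identification being the standard fact that $L_1(\mu)\iten X$ is (isometrically) a subspace of $L_\infty(\mu,X^*)^{*}$ via the natural pairing, and that the predual of $L_\infty(\mu,X^*)$ contains $L_1(\mu)\iten X$. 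The key point I want to extract is: slices of $B_{L_1(\mu)\iten X}$ are, up to the usual Hahn--Banach/Goldstine density argument, weak-star slices of $B_{(L_1(\mu)\pten X)^*}$ restricted to the predual, so having diameter two for all weak-star slices of $B_{(L_1(\mu)\pten X)^*}$ forces diameter two for all slices of $B_{L_1(\mu)\iten X}$.

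The steps, in order: (1) By Proposition~\ref{prop:L1odp}, since $\mu$ is non-atomic, $L_1(\mu)$ has the ODP. (2) Apply Proposition~\ref{propo2-ruda} with this $X=L_1(\mu)$ and with the role of ``$Y$'' played by our space (call it $Z$): the norm of $L_1(\mu)\pten Z$ is $2$-rough. (3) Invoke Lemma~\ref{lema:cara2-ruda}, equivalence (1)$\Leftrightarrow$(2): every weak-star slice of $B_{(L_1(\mu)\pten Z)^*}$ has diameter two; by the duality above, $(L_1(\mu)\pten Z)^*=L_\infty(\mu,Z^*)$, using that $Z^*$ has the RNP. (4) Now observe $L_1(\mu)\iten Z$ sits isometrically inside $L_\infty(\mu,Z^*)^{*}=(L_1(\mu)\pten Z)^{**}$ as a norming subspace, indeed it is contained in $(L_1(\mu)\pten Z)$ (note $u\in L_1(\mu)\otimes Z$ acts on $L_\infty(\mu,Z^*)$ by integration, and the injective norm is exactly the resulting functional norm). (5) A slice $S(B_{L_1(\mu)\iten Z},\Phi,\alpha)$ with $\Phi\in (L_1(\mu)\iten Z)^*$: by Goldstine/Hahn--Banach we may approximate $\Phi$ (up to a harmless change of $\alpha$) by an element of $(L_1(\mu)\iten Z)^{*}$ that is weak-star continuous on $B_{(L_1(\mu)\pten Z)^{*}}$, i.e.\ comes from $L_1(\mu)\pten Z$ itself, so that $S$ contains (a weak-star-dense portion of) a weak-star slice of $B_{L_1(\mu)\pten Z)^{*}}$; since that weak-star slice has diameter two and the diameter is computed with the same norm, and weak-star-density does not decrease the diameter, we conclude $\operatorname{diam}(S)=2$.

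The main obstacle I expect is step (4)--(5): making precise the passage from ``weak-star slices of $B_{(L_1(\mu)\pten Z)^*}$ have diameter two'' to ``norm slices of $B_{L_1(\mu)\iten Z}$ have diameter two.'' One must be careful that $L_1(\mu)\iten Z$ is genuinely a subspace of $(L_1(\mu)\pten Z)^{*}$ (this is where $Z^{*}$ having RNP, hence $(L_1(\mu)\pten Z)^{*}=L_\infty(\mu,Z^{*})$ with $L_1(\mu)\iten Z=C_0$-type subspace of it, is used) and that every norm-slice of the smaller ball is witnessed, after an $\varepsilon$-perturbation of the slicing functional into the weak-star-continuous functionals, by a weak-star slice of the bigger ball intersected with $B_{L_1(\mu)\iten Z}$; since the diameter-two pairs produced in Lemma~\ref{lema:cara2-ruda}(2) (equivalently in the proof of Proposition~\ref{propo2-ruda}) are elements of $X\pten Y$ lying in the weak-star slice, and such elements can be approximated in the relevant weak-star sense by elements of $L_1(\mu)\iten Z$, the diameter survives the restriction. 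The verification that the perturbation lands inside the slice and that no diameter is lost is routine but must be written carefully; everything else is a direct chain of already-established results.
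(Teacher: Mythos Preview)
Your duality identifications are wrong, and the approach collapses at step~(4). You claim that $u\in L_1(\mu)\otimes Z$ acts on $L_\infty(\mu,Z^*)$ by integration with functional norm equal to the \emph{injective} norm of $u$. But the predual of $L_\infty(\mu,Z^*)$ (when $Z^*$ has the RNP) is $L_1(\mu,Z)=L_1(\mu)\pten Z$, so under that pairing the functional norm of $u$ is its \emph{projective} norm $\|u\|_\pi$, not $\|u\|_\varepsilon$. Hence $L_1(\mu)\iten Z$ does not sit isometrically inside $(L_1(\mu)\pten Z)^*$, the chain $(L_1(\mu)\pten Z)^*=L_\infty(\mu,Z^*)=(L_1(\mu)\iten Z)^{**}$ is false, and the Goldstine/density argument you sketch cannot get off the ground.

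The paper runs the duality in the other direction. Since $Z^*$ has the RNP (and $L_1(\mu)$ has the approximation property), one has the isometric identity
\[
(L_1(\mu)\iten Z)^* \;=\; L_\infty(\mu)\pten Z^*
\]
(\cite[Theorem~5.33]{rya}). Now $L_\infty(\mu)$ is an $L_1$-predual with the Daugavet property (because $\mu$ is non-atomic), hence has the ODP. Proposition~\ref{propo2-ruda}, applied with $L_\infty(\mu)$ in the role of the ODP space and $Z^*$ as the second factor, gives that $L_\infty(\mu)\pten Z^*$ has a $2$-rough norm (equivalently, is LOH). The duality result \cite[Theorem~3.3]{hlp} then yields that its predual $L_1(\mu)\iten Z$ has the slice-D2P. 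In short: apply the ODP of $L_\infty(\mu)$, not of $L_1(\mu)$, and look at the dual of the injective tensor product rather than trying to embed the injective tensor product inside the dual of a projective one.
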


\begin{proof}
Notice that $(L_1(\mu)\iten X)^*=L_\infty(\mu)\pten X^*$ by \cite[Theorem 5.33]{rya}, so its norm is $2$-rough by Proposition \ref{propo2-ruda}. Consequently, $L_1(\mu)\iten X$ has the slice-D2P by \cite[Theorem 3.3]{hlp}.
\end{proof}

In \cite[Question b)]{aln} it is asked how are the diameter two properties, in general, preserved by tensor product spaces. Corollary \ref{cor:sliced2pinyec} yields new examples of injetive tensor products with the slice-D2P different from those obtained in \cite[Theorem 5.3]{abr} and of \cite[Theorem 2.6]{llr}.

\section*{Acknowledgements}  We thank Vladimir Kadets and Dirk Werner for fruitful conversations. Part of the research of this paper was developed during several visits of the first author to Instituto de Ciencias Matem\'aticas in June 2018 and to Facultad de Matem\'aticas of the Universidad Complutense de Madrid in February 2019. He is grateful to both institutions for hospitality and excellent working conditions during both visits.


\begin{thebibliography}{999999}



\bibitem{aln} T.A. Abrahansen,  V. Lima and O. Nygaard,  {\it Remarks on diameter two properties}, J. Convex Anal. {\bf 20} (2013), (2), 439--452.


\bibitem {abr} M.~D.~Acosta, J.~Becerra Guerrero, and A.~Rodr\'iguez-Palacios, \emph{Weakly open sets in the unit ball of the projective tensor product of Banach spaces}, J. Math. Anal. Appl. \textbf{383} (2011), 461--473.

\bibitem {alka} F.~Albiac and N.~Kalton, \emph{Topics in Banach space theory}, Graduate Texts in Mathematics 233, Springer-Verlag, New York 2006.

\bibitem {blr2} J. Becerra Guerrero, G. L\'opez-P\'erez, and A. Rueda Zoca, \textit{Diametral diameter two properties in Banach spaces}, J. Conv. Anal. \textbf{25}, 3 (2018), 817--840.

\bibitem {blr}   J. Becerra Guerrero, G. L\'opez-P\'erez, and A. Rueda Zoca, \textit{Extreme differences between weakly open subsets and convex combinations of slices in Banach spaces}, Adv. Math. \textbf{269} (2015),  56--70.

\bibitem {bm} J. Becerra Guerrero and M. Mart\'in, \textit{A characterisation of the Daugavet property for Lindenstrauss spaces}, Methods in Banach space theory (Jes\'us M.F. Castillo and William B. Johnson Editors), pp. 91--96, London Mathematical Society Lecture Note Series 337, 2006. 

\bibitem {bm2} J. Becerra Guerrero and M. Mart\'in, \textit{The Daugavet property of $C^*$-algebras, $JB^*$-triples and of their isometric preduals}, J. Funct. Anal. \textbf{224} (2005), 316--337.

\bibitem {br} J. Becerra Guerrero and A. Rodr\'iguez-Palacios, \textit{Banach spaces with the Daugavet property, and the centralizer}, J. Funct. Anal. \textbf{254} (2008), 2294--2302.

\bibitem {bspw} S. Brach, E. S\'anchez P\'erez and D. Werner \textit{The Daugavet equation for bounded vector valued functions}, Rocky Mount. J. Math. \textbf{47}, no. 6 (2017), 1765--1801. 

\bibitem {dau} I. K. Daugavet, \textit{On a property of completely continuous operators in the space C}, Uspekhi Mat.
Nauk \textbf{18} (1963), 157--158 (Russian).

\bibitem{DeFl} A. Defant and K. Floret, \emph{Tensor norms and operator ideals}, North Holland, Amsterdam, 1993. 


\bibitem {dgz} R. Deville, G. Godefroy and V. Zizler, {\it Smoothness and renormings in Banach spaces,} Pitman
Monographs and Surveys in Pure and Applied Math. {\bf{64}}, 1993.

\bibitem {flo} K. Floret, \textit{Natural norms on symmetric tensor products of normed spaces}, Note Math. \textbf{17} (1997), 153--188.

\bibitem {gjl} J. M. Guti\'errez,  J. A. Jaramillo, and J. G. Llavona, \textit{Polynomials and geometry of
Banach spaces}, Extracta Math. \textbf{10} (1995), no. 2, 79--114.

\bibitem {hlp} R.~Haller, J.~Langemets, and M.~P\~oldvere, \emph{On duality of diameter 2 properties}, J. Conv. Anal. \textbf{22}
  (2015), no.~2, 465--483.
  
\bibitem {ikw} Y. Ivakhno, V. Kadets and D. Werner, \textit{The Daugavet property for spaces of Lipschitz functions}, Math. Scand. \textbf{101} (2007), 261--279.
  
\bibitem {kkw} V.~Kadets, N.~Kalton and D.~Werner, \emph{Remarks on rich subspaces of Banach spaces}, Studia Math. \textbf{159} (2003), no.~2, 195--206.
  
\bibitem {ksw} V. Kadets, V. Shepelska and D. Werner, \textit{Thickness of the unit sphere, $\ell_1$-types, and the almost Daugavet property}, Houston J. Math. \textbf{37}, No. 3 (2011), 867--878.   

\bibitem {kssw} V. Kadets, R. Shvidkoy, G. Sirotkin and D. Werner, \textit{Banach spaces with the Daugavet property}, Trans. Amer. Math. Soc. \textbf{352}, No.2 (2000), 855--873.

\bibitem {kw} V. Kadets and D. Werner, \textit{A Banach space with the Schur and the Daugavet property}, Proc. Amer. Math. Soc. \textbf{132}, 6 (2004),  1765--1773.

\bibitem {lan} J.~Langemets, \emph{Geometrical structure in diameter 2 Banach spaces}, Dissertationes Mathematicae Universitatis
  Tartuensis 99 (2015),  \url{http://dspace.ut.ee/handle/10062/47446}.

\bibitem{llr} J. Langemets, V. Lima and A. Rueda Zoca, \textit{Almost square and octahedral norms in tensor products of Banach spaces}, RACSAM \textbf{111} (2017), 841--853.

\bibitem {llr2} J. Langemets, V. Lima and A. Rueda Zoca, \textit{Octahedrality in tensor products of Banach spaces}, Q. J. Math. \textbf{68}, 4 (2017), 1247--1260.

\bibitem{linds} J. Lindenstrauss, \textit{Extension of compact operators}, Mem. Amer. Math. Soc. \textbf{48} (1964).

\bibitem{lotz} H. P. Lotz, \textit{Extensions and liftings of positive linear mappings on Banach lattices}, Trans. Amer. Math. Soc. \textbf{211} (1975), 85--100.

\bibitem{mn} P. Meyer-Nieberg, \textit{Banach lattices}, Universitext. Springer-Verlag, Berlin, 1991.

\bibitem{rueda} A.~Rueda Zoca, \textit{Daugavet property and separability in Banach spaces}, Banach J. Math. Anal. \textbf{12}, 1 (2018), 68--84.

\bibitem {rya} R.~A.~Ryan, \emph{Introduction to tensor products of Banach spaces}, Springer Monographs
in Mathematics, Springer-Verlag, London, 2002.


\bibitem {werner} D. Werner, \textit{Recent progress on the Daugavet property}, Irish Math. Soc. Bull. \textbf{46} (2001) 77--97.

\bibitem {woj} P. Wojtaszczyk, \textit{Some remarks on the Daugavet equation}, Proc. Amer. Math. Soc. \textbf{115}, 4 (1992), 1047--1052.

\end{thebibliography}
\end{document}